\documentclass[11pt]{article}
\usepackage{amsmath, amsthm, amsfonts, amssymb, fullpage, indentfirst, cite}
\linespread{1.21}

\theoremstyle{plain}
\newtheorem{proposition}{Proposition}
\newtheorem{lemma}{Lemma}
\newtheorem{theorem}{Theorem}

\theoremstyle{definition}

\newtheorem{assumption}{Assumption}
\newtheorem{algorithm}{Algorithm}
\theoremstyle{remark}

\newenvironment{algorithminit}[1]{\ \\{\em Initialization}: #1\begin{list}{\labelenumi}{\topsep0in\itemsep0in\parsep0in\labelwidth1in\usecounter{enumi}}}{\setcounter{enumii}{\value{enumi}}\end{list}}
\newenvironment{algorithmoper}[1]{{\em Operation}: #1\begin{list}{\labelenumi}{\topsep0in\itemsep0in\parsep0in\labelwidth1in\usecounter{enumi}\setcounter{enumi}{\value{enumii}}}}{\hfill$\blacksquare$\end{list}}

\begin{document}

\title{\Large\bf Gossip Algorithms for Convex Consensus Optimization over Networks\footnote{This work was supported by the National Science Foundation under grant CMMI-0900806.}}
\author{\small\begin{tabular}{cc}Jie Lu, Choon Yik Tang, and Paul R. Regier\footnote{P. R. Regier and T. D. Bow were supported by the National Science Foundation Research Experiences for Undergraduates program under grant EEC-0755011.} & Travis D. Bow\footnotemark[2]\\ School of Electrical and Computer Engineering & Department of Mechanical Engineering\\ University of Oklahoma & Stanford University\\ Norman, OK 73019, USA & Stanford, CA 94305, USA\\ {\sf\{jie.lu-1,cytang,paulregier\}@ou.edu} & {\sf tbow@stanford.edu}\end{tabular}}
\date{\today}
\maketitle

\begin{abstract}
In many applications, nodes in a network desire not only a consensus, but an optimal one. To date, a family of subgradient algorithms have been proposed to solve this problem under general convexity assumptions. This paper shows that, for the scalar case and by assuming a bit more, novel non-gradient-based algorithms with appealing features can be constructed. Specifically, we develop {\em Pairwise Equalizing} (PE) and {\em Pairwise Bisectioning} (PB), two gossip algorithms that solve unconstrained, separable, convex consensus optimization problems over undirected networks with time-varying topologies, where each local function is strictly convex, continuously differentiable, and has a minimizer. We show that PE and PB are easy to implement, bypass limitations of the subgradient algorithms, and produce switched, nonlinear, networked dynamical systems that admit a common Lyapunov function and asymptotically converge. Moreover, PE generalizes the well-known Pairwise Averaging and Randomized Gossip Algorithm, while PB relaxes a requirement of PE, allowing nodes to never share their local functions.
\end{abstract}

\section{Introduction}\label{sec:intro}

Consider an $N$-node multi-hop network, where each node $i$ observes a convex function $f_i$, and all the $N$ nodes wish to determine an optimal consensus $x^*$, which minimizes the sum of the $f_i$'s:
\begin{align}
x^*\in\operatornamewithlimits{arg\,min}_x\sum_{i=1}^Nf_i(x).\label{eq:x*=argminsumf}
\end{align}
Since each node $i$ knows only its own $f_i$, the nodes cannot individually compute the optimal consensus $x^*$ and, thus, must collaborate to do so. This problem of achieving unconstrained, separable, convex consensus optimization has many applications in multi-agent systems and wired/\linebreak[0]wireless/\linebreak[0]social networks, some examples of which can be found in \cite{SonSH05, Rabbat04}.

The current literature offers a large body of work on distributed consensus (see \cite{Olfati-Saber07} for a survey), including a line of research that focuses on solving problem \eqref{eq:x*=argminsumf} for an optimal consensus $x^*$ \cite{Nedic01, Nedic01b, Nedic01c, Rabbat04, Rabbat05, SonSH05, Johansson07, Nedic07, Ram07, Johansson08, Lobel08, Nedic08, Nedic09, Ram09, Ram09b, Ram10}. This line of work has resulted in a family of discrete-time subgradient algorithms, including the {\em incremental} subgradient algorithms \cite{Nedic01, Nedic01b, Nedic01c, Rabbat04, Rabbat05, SonSH05, Johansson07, Ram07, Ram09}, whereby an estimate of $x^*$ is passed around the network, and the {\em non-incremental} ones \cite{Nedic07, Johansson08, Lobel08, Nedic08, Nedic09, Ram09b, Ram10}, whereby each node maintains an estimate of $x^*$ and updates it iteratively by exchanging information with neighbors.

Although the aforementioned subgradient algorithms are capable of solving problem \eqref{eq:x*=argminsumf} under fairly weak assumptions, they suffer from one or more of the following limitations:
\begin{enumerate}
\renewcommand{\theenumi}{L\arabic{enumi}}\itemsep-\parsep
\item {\em Stepsizes}: The algorithms require selection of stepsizes, which may be constant, diminishing, or dynamic. In general, constant stepsizes ensure only convergence to neighborhoods of $x^*$, rather than to $x^*$ itself. Moreover, they present an inevitable trade-off: larger stepsizes tend to yield larger convergence neighborhoods, while smaller ones tend to yield slower convergence. In contrast, diminishing stepsizes typically ensure asymptotic convergence. However, the convergence may be very slow, since the stepsizes may diminish too quickly. Finally, dynamic stepsizes allow shaping of the convergence behavior \cite{Nedic01, Nedic01c}. Unfortunately, their dynamics depend on global information that is often costly to obtain. Hence, selecting appropriate stepsizes is not a trivial task, and inappropriate choices can cause poor performance.
\item {\em Hamiltonian cycle}: Many incremental subgradient algorithms \cite{Nedic01, Nedic01b, Nedic01c, Rabbat04, Rabbat05, SonSH05, Ram07, Ram09} require the nodes to construct and maintain a Hamiltonian cycle (i.e., a closed path that visits every node exactly once) or a pseudo one (i.e., that allows multiple visits), which may be very difficult to carry out, especially in a decentralized, leaderless fashion.
\item {\em Multi-hop transmissions}: Some incremental subgradient algorithms \cite{Nedic01, Nedic01b, Nedic01c} require the node that has the latest estimate of $x^*$ to pass it on to a randomly and equiprobably chosen node in the network. This implies that every node must be aware of all the nodes in the network, and the algorithms must run alongside a routing protocol that enables such passing, which may not always be the case. The fact that the chosen node is typically multiple hops away also implies that these algorithms are communication inefficient, requiring plenty of transmissions (up to the network diameter) just to complete a single iteration.
\item {\em Lack of asymptotic convergence}: A variety of convergence properties have been established for the subgradient algorithms in \cite{Nedic01, Nedic01b, Nedic01c, Rabbat04, Rabbat05, SonSH05, Johansson07, Nedic07, Ram07, Johansson08, Lobel08, Nedic08, Nedic09, Ram09, Ram09b, Ram10}, including error bounds, convergence in expectations, convergence in limit inferiors, convergence rates, etc. In contrast, relatively few asymptotic convergence results have been reported, except for the subgradient algorithms with diminishing or dynamic stepsizes in \cite{Nedic01, Nedic01b, Nedic01c, Ram07, Ram09, Ram09b, Ram10}.
\end{enumerate}

Limitations~L1--L4 facing the subgradient algorithms raise the question of whether it is possible to devise algorithms, which require neither the notion of a stepsize, the construction of a (pseudo-)Hamiltonian cycle, nor the use of a routing protocol for multi-hop transmissions, and yet guarantee asymptotic convergence, bypassing~L1--L4. In this paper, we show that, for the {\em one-dimensional} case and with a few mild assumptions, such algorithms can be constructed. Specifically, instead of letting the network be directed, we assume that it is undirected, with possibly a time-varying topology unknown to any of the nodes. In addition, instead of letting each $f_i$ in \eqref{eq:x*=argminsumf} be convex but not necessarily differentiable, we assume that it is strictly convex, continuously differentiable, and has a minimizer. Based on these assumptions, we develop two gossip-style, distributed asynchronous iterative algorithms, referred to as {\em Pairwise Equalizing} (PE) and {\em Pairwise Bisectioning} (PB), which not only solve problem \eqref{eq:x*=argminsumf} and circumvent limitations~L1--L4, but also are rather easy to implement---although computationally they are more demanding than the subgradient algorithms.

As will be shown in the paper, PE and PB exhibit a number of notable features. First, they produce switched, nonlinear, networked dynamical systems whose state evolves along an invariant manifold whenever nodes gossip with each other. The switched systems are proved, using Lyapunov stability theory, to be asymptotically convergent, as long as the gossiping pattern is sufficiently rich. In particular, we show that the first-order convexity condition can be used to form a common Lyapunov function, as well as to characterize drops in its value after every gossip. Second, PE and PB do not belong to the family of subgradient algorithms as they utilize fundamentally different, non-gradient-based update rules that involve no stepsize. These update rules are synthesized from two simple ideas---{\em conservation} and {\em dissipation}---which are somewhat similar to how Pairwise Averaging \cite{Tsitsiklis84} was conceived back in the 1980s. Indeed, we show that PE reduces to Pairwise Averaging \cite{Tsitsiklis84} and Randomized Gossip Algorithm \cite{Boyd06} when problem \eqref{eq:x*=argminsumf} specializes to an averaging problem. Finally, PE requires one-time sharing of the $f_i$'s between gossiping nodes, which may be costly or impermissible in some applications. This requirement is eliminated by PB at the expense of more communications per iteration.

\section{Problem Formulation}\label{sec:probform}

Consider a multi-hop network consisting of $N\ge2$ nodes, connected by bidirectional links in a time-varying topology. The network is modeled as an undirected graph $\mathcal{G}(k)=(\mathcal{V},\mathcal{E}(k))$, where $k\in\mathbb{N}=\{0,1,2,\ldots\}$ denotes time, $\mathcal{V}=\{1,2,\ldots,N\}$ represents the set of $N$ nodes, and $\mathcal{E}(k)\subset\{\{i,j\}:i,j\in\mathcal{V},i\neq j\}$ represents the nonempty set of links at time $k$. Any two nodes $i,j\in\mathcal{V}$ are one-hop neighbors and can communicate at time $k\in\mathbb{N}$ if and only if $\{i,j\}\in\mathcal{E}(k)$.

Suppose, at time $k=0$, each node $i\in\mathcal{V}$ observes a function $f_i:\mathcal{X}\rightarrow\mathbb{R}$, which maps a nonempty open interval $\mathcal{X}\subset\mathbb{R}$ to $\mathbb{R}$, and which satisfies the following assumption:

\begin{assumption}\label{asm:fi}
For each $i\in\mathcal{V}$, the function $f_i$ is strictly convex, continuously differentiable, and has a minimizer $x_i^*\in\mathcal{X}$.
\end{assumption}

Suppose, upon observing the $f_i$'s, all the $N$ nodes wish to solve the following unconstrained, separable, convex optimization problem:
\begin{align}
\min_{x\in\mathcal{X}}F(x),\label{eq:minF}
\end{align}
where the function $F:\mathcal{X}\rightarrow\mathbb{R}$ is defined as $F(x)=\sum_{i\in\mathcal{V}}f_i(x)$. Clearly, $F$ is strictly convex and continuously differentiable. To show that $F$ has a unique minimizer in $\mathcal{X}$ so that problem \eqref{eq:minF} is well-posed, let $f_i':\mathcal{X}\rightarrow\mathbb{R}$ and $F':\mathcal{X}\rightarrow\mathbb{R}$ denote the derivatives of $f_i$ and $F$, respectively, and consider the following lemma and proposition:

\begin{lemma}\label{lem:exisuniqz}
Let $g_i:\mathcal{X}\rightarrow\mathbb{R}$ be a strictly increasing and continuous function and $z_i\in\mathcal{X}$ for $i=1,2,\ldots,n$. Then, there exists a unique $z\in\mathcal{X}$ such that $\sum_{i=1}^ng_i(z)=\sum_{i=1}^ng_i(z_i)$. Moreover, $z\in[\min_{i\in\{1,2,\ldots,n\}}z_i,\max_{i\in\{1,2,\ldots,n\}}z_i]$.
\end{lemma}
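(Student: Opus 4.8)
The plan is to prove existence and uniqueness separately, exploiting the hypotheses on the $g_i$'s. Define $G:\mathcal{X}\to\mathbb{R}$ by $G(y)=\sum_{i=1}^n g_i(y)$. Since each $g_i$ is strictly increasing and continuous, $G$ is strictly increasing and continuous as a finite sum of such functions. Uniqueness is then immediate: if $G(z)=G(z')$ with $z\neq z'$, strict monotonicity forces a contradiction. So the whole content is existence of a $z\in\mathcal{X}$ with $G(z)=\sum_{i=1}^n g_i(z_i)$, together with the localization to $[\min_i z_i,\max_i z_i]$.

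For existence, I would set $z_{\min}=\min_{i\in\{1,\ldots,n\}} z_i$ and $z_{\max}=\max_{i\in\{1,\ldots,n\}} z_i$, both of which lie in $\mathcal{X}$. Evaluate $G$ at these two points. Because each $g_i$ is increasing, $g_i(z_{\min})\le g_i(z_i)\le g_i(z_{\max})$ for every $i$; summing over $i$ gives $G(z_{\min})\le \sum_{i=1}^n g_i(z_i)\le G(z_{\max})$. Since $G$ is continuous on the interval $[z_{\min},z_{\max}]\subset\mathcal{X}$, the Intermediate Value Theorem yields a point $z\in[z_{\min},z_{\max}]$ with $G(z)=\sum_{i=1}^n g_i(z_i)$. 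This simultaneously establishes existence and the claimed inclusion $z\in[\min_i z_i,\max_i z_i]$. (If $z_{\min}=z_{\max}$, i.e.\ all $z_i$ coincide, then $z=z_1$ trivially works.)

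Strictly speaking there is nothing hard here; the only point requiring a modicum of care is confirming that the interval $[z_{\min},z_{\max}]$ is contained in $\mathcal{X}$ so that the IVT applies on it — but this is clear since $\mathcal{X}$ is an interval containing all the $z_i$. The main ``obstacle,'' if any, is purely expository: making sure the two halves of the conclusion are both extracted from the single IVT argument rather than proved redundantly, and handling the degenerate all-equal case in one line. I would present existence via IVT first, then note uniqueness from strict monotonicity, and remark that the localization falls out of the same bracketing used to invoke the IVT.
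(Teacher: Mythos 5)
Your proof is correct and follows essentially the same route as the paper's: bound $\sum_i g_i(z_i)$ between $G(z_{\min})$ and $G(z_{\max})$, apply the Intermediate Value Theorem to the strictly increasing continuous sum $G=\sum_i g_i$ for existence and localization, and use strict monotonicity for uniqueness. If anything, you are slightly more explicit than the paper in separating the uniqueness step, which the paper folds into its single IVT sentence.
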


\begin{proof}
Since $g_i$ is strictly increasing and continuous $\forall i\in\{1,2,\ldots,n\}$, so is $\sum_{i=1}^ng_i:\mathcal{X}\rightarrow\mathbb{R}$. Thus, $\sum_{i=1}^ng_i(\min_{j\in\{1,2,\ldots,n\}}z_j)\le\sum_{i=1}^ng_i(z_i)\le\sum_{i=1}^ng_i(\max_{j\in\{1,2,\ldots,n\}}z_j)$. It follows from the Intermediate Value Theorem that there exists a unique $z\in\mathcal{X}$ such that $\sum_{i=1}^ng_i(z)=\sum_{i=1}^ng_i(z_i)$, and that $z\in[\min_{i\in\{1,2,\ldots,n\}}z_i,\max_{i\in\{1,2,\ldots,n\}}z_i]$.
\end{proof}

\begin{proposition}\label{pro:exisuniqx*}
With Assumption~\ref{asm:fi}, there exists a unique $x^*\in\mathcal{X}$, which satisfies $F'(x^*)=0$, minimizes $F$ over $\mathcal{X}$, and solves problem \eqref{eq:minF}, i.e., $x^*=\operatornamewithlimits{arg\,min}_{x\in\mathcal{X}}F(x)$.
\end{proposition}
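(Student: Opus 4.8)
The plan is to apply Lemma~\ref{lem:exisuniqz} directly, with the strictly increasing continuous functions being the derivatives $f_i'$. First I would observe that, by Assumption~\ref{asm:fi}, each $f_i$ is strictly convex and continuously differentiable, so its derivative $f_i':\mathcal{X}\rightarrow\mathbb{R}$ is continuous and strictly increasing (strict convexity of a differentiable function is equivalent to strict monotonicity of its derivative). Moreover, since $f_i$ has a minimizer $x_i^*\in\mathcal{X}$ and $\mathcal{X}$ is open, the first-order optimality condition gives $f_i'(x_i^*)=0$.

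Next I would instantiate Lemma~\ref{lem:exisuniqz} with $n=N$, $g_i=f_i'$, and $z_i=x_i^*$. The lemma then yields a unique $x^*\in\mathcal{X}$ such that $\sum_{i\in\mathcal{V}}f_i'(x^*)=\sum_{i\in\mathcal{V}}f_i'(x_i^*)=\sum_{i\in\mathcal{V}}0=0$, i.e., $F'(x^*)=0$, and additionally $x^*\in[\min_{i\in\mathcal{V}}x_i^*,\max_{i\in\mathcal{V}}x_i^*]$. Thus $x^*$ is a stationary point of $F$. To conclude that $x^*$ is the unique global minimizer, I would invoke strict convexity of $F$: any stationary point of a strictly convex function is its unique global minimizer over $\mathcal{X}$ (convexity gives $F(x)\ge F(x^*)+F'(x^*)(x-x^*)=F(x^*)$ for all $x$, and strictness forces the inequality to be strict for $x\neq x^*$). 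Hence $x^*=\arg\min_{x\in\mathcal{X}}F(x)$ and it solves problem~\eqref{eq:minF}.

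For uniqueness of the point satisfying $F'(x^*)=0$, I would note it follows either from the uniqueness clause already supplied by Lemma~\ref{lem:exisuniqz}, or independently from the fact that $F'$ is itself strictly increasing (being a sum of strictly increasing functions), hence injective.

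I do not anticipate a genuine obstacle here, since Lemma~\ref{lem:exisuniqz} does the analytic heavy lifting. The only points requiring a word of care are the two standard facts linking differentiability and convexity---that $f_i$ strictly convex and differentiable implies $f_i'$ strictly increasing, and that a stationary point of a strictly convex function is its unique minimizer---both of which are routine; and the observation that openness of $\mathcal{X}$ is what makes the first-order condition $f_i'(x_i^*)=0$ valid at the minimizer $x_i^*$.
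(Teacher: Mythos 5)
Your proof is correct and follows essentially the same route as the paper's: both apply Lemma~\ref{lem:exisuniqz} with $g_i=f_i'$ and $z_i=x_i^*$ to obtain a unique $x^*$ with $F'(x^*)=0$, then invoke strict convexity of $F$ to conclude it is the minimizer. The extra details you supply (openness of $\mathcal{X}$ justifying $f_i'(x_i^*)=0$, and the first-order argument for why a stationary point of a strictly convex function is the unique minimizer) are correct and merely make explicit what the paper leaves implicit.
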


\begin{proof}
By Assumption~\ref{asm:fi}, for every $i\in\mathcal{V}$, $f_i'$ is strictly increasing and continuous. By Lemma~\ref{lem:exisuniqz}, there exists a unique $x^*\in\mathcal{X}$ such that $\sum_{i\in\mathcal{V}}f_i'(x^*)=\sum_{i\in\mathcal{V}}f_i'(x_i^*)$. Since $F'=\sum_{i\in\mathcal{V}}f_i'$ and $f_i'(x_i^*)=0$ $\forall i\in\mathcal{V}$, $F'(x^*)=0$. Since $F$ is strictly convex, $x^*$ minimizes $F$ over $\mathcal{X}$, solving \eqref{eq:minF}.
\end{proof}

Given the above, the goal is to construct a distributed asynchronous iterative algorithm free of limitations~L1--L4, with which each node can asymptotically determine the unknown optimizer $x^*$.

\section{Pairwise Equalizing}\label{sec:PE}

In this section, we develop a gossip algorithm having the aforementioned features.

Suppose, at time $k=0$, each node $i\in\mathcal{V}$ creates a state variable $\hat{x}_i\in\mathcal{X}$ in its local memory, which represents its estimate of $x^*$. Also suppose, at each subsequent time $k\in\mathbb{P}=\{1,2,\ldots\}$, an iteration, called {\em iteration $k$}, takes place. Let $\hat{x}_i(0)$ represent the initial value of $\hat{x}_i$, and $\hat{x}_i(k)$ its value upon completing each iteration $k\in\mathbb{P}$. With this setup, the goal may be stated as
\begin{align}
\lim_{k\rightarrow\infty}\hat{x}_i(k)=x^*,\quad\forall i\in\mathcal{V}.\label{eq:limxh=x*}
\end{align}

To design an algorithm that guarantees \eqref{eq:limxh=x*}, consider a {\em conservation condition}
\begin{align}
\sum_{i\in\mathcal{V}}f_i'(\hat{x}_i(k))=0,\quad\forall k\in\mathbb{N},\label{eq:sumf'xh=0}
\end{align}
which says that the $\hat{x}_i(k)$'s evolve in a way that the sum of the derivatives $f_i'$'s, evaluated at the $\hat{x}_i(k)$'s, is always conserved at zero. Moreover, consider a {\em dissipation condition}
\begin{align}
\lim_{k\rightarrow\infty}\hat{x}_i(k)=\tilde{x},\quad\forall i\in\mathcal{V},\;\text{for some}\;\tilde{x}\in\mathcal{X},\label{eq:limxh=xt}
\end{align}
which says that the $\hat{x}_i(k)$'s gradually dissipate their differences and asymptotically achieve some arbitrary consensus $\tilde{x}\in\mathcal{X}$. Note that if \eqref{eq:sumf'xh=0} is met, then $\lim_{k\rightarrow\infty}\sum_{i\in\mathcal{V}}f_i'(\hat{x}_i(k))=\lim_{k\rightarrow\infty}0=0$. If, in addition, \eqref{eq:limxh=xt} is met, then due to the continuity of every $f_i'$, $\sum_{i\in\mathcal{V}}\lim_{k\rightarrow\infty}f_i'(\hat{x}_i(k))=\sum_{i\in\mathcal{V}}f_i'(\lim_{k\rightarrow\infty}\hat{x}_i(k))=\sum_{i\in\mathcal{V}}f_i'(\tilde{x})=F'(\tilde{x})$. Because $\lim_{k\rightarrow\infty}f_i'(\hat{x}_i(k))$ exists for every $i\in\mathcal{V}$, $\lim_{k\rightarrow\infty}\sum_{i\in\mathcal{V}}f_i'(\hat{x}_i(k))=\sum_{i\in\mathcal{V}}\lim_{k\rightarrow\infty}f_i'(\hat{x}_i(k))$. Combining the above, we obtain $F'(\tilde{x})=0$. From Proposition~\ref{pro:exisuniqx*}, we see that the arbitrary consensus $\tilde{x}$ must be the unknown optimizer $x^*$, i.e., $\tilde{x}=x^*$, so that \eqref{eq:limxh=x*} holds. Therefore, to design an algorithm that ensures \eqref{eq:limxh=x*}---where $x^*$ explicitly appears, it suffices to make the algorithm satisfy both the conservation and dissipation conditions \eqref{eq:sumf'xh=0} and \eqref{eq:limxh=xt}---where $x^*$ is implicitly encoded.

To this end, observe that \eqref{eq:sumf'xh=0} holds if and only if the $\hat{x}_i(0)$'s are such that $\sum_{i\in\mathcal{V}}f_i'(\hat{x}_i(0))=0$, and the $\hat{x}_i(k)$'s are related to the $\hat{x}_i(k-1)$'s through
\begin{align}
\sum_{i\in\mathcal{V}}f_i'(\hat{x}_i(k))=\sum_{i\in\mathcal{V}}f_i'(\hat{x}_i(k-1)),\quad\forall k\in\mathbb{P}.\label{eq:sumf'xh=sumf'xh}
\end{align}
To satisfy $\sum_{i\in\mathcal{V}}f_i'(\hat{x}_i(0))=0$, it suffices that each node $i\in\mathcal{V}$ computes $x_i^*$ on its own and sets
\begin{align}
\hat{x}_i(0)=x_i^*,\quad\forall i\in\mathcal{V},\label{eq:xh0=x*}
\end{align}
since $f_i'(x_i^*)=0$. To satisfy \eqref{eq:sumf'xh=sumf'xh}, consider a gossip algorithm, whereby at each iteration $k\in\mathbb{P}$, a pair $u(k)=\{u_1(k),u_2(k)\}\in\mathcal{E}(k)$ of one-hop neighbors $u_1(k)$ and $u_2(k)$ gossip and update their $\hat{x}_{u_1(k)}(k)$ and $\hat{x}_{u_2(k)}(k)$, while the rest of the $N$ nodes stay idle, i.e.,
\begin{align}
\hat{x}_i(k)=\hat{x}_i(k-1),\quad\forall k\in\mathbb{P},\;\forall i\in\mathcal{V}-u(k).\label{eq:xh=xh/u}
\end{align}
With \eqref{eq:xh=xh/u}, equation \eqref{eq:sumf'xh=sumf'xh} simplifies to
\begin{align}
f_{u_1(k)}'(\hat{x}_{u_1(k)}(k))+f_{u_2(k)}'(\hat{x}_{u_2(k)}(k))=f_{u_1(k)}'(\hat{x}_{u_1(k)}(k-1))+f_{u_2(k)}'(\hat{x}_{u_2(k)}(k-1)),\quad\forall k\in\mathbb{P}.\label{eq:f'xhf'xh=f'xhf'xh}
\end{align}
Hence, all that is needed for \eqref{eq:sumf'xh=sumf'xh} to hold is a gossip between nodes $u_1(k)$ and $u_2(k)$ to share their $f_{u_1(k)}$, $f_{u_2(k)}$, $\hat{x}_{u_1(k)}(k-1)$, and/or $\hat{x}_{u_2(k)}(k-1)$, followed by a joint update of their $\hat{x}_{u_1(k)}(k)$ and $\hat{x}_{u_2(k)}(k)$, which ensures \eqref{eq:f'xhf'xh=f'xhf'xh}.

Obviously, \eqref{eq:f'xhf'xh=f'xhf'xh} alone does not uniquely determine $\hat{x}_{u_1(k)}(k)$ and $\hat{x}_{u_2(k)}(k)$. This suggests that the available degree of freedom may be used to account for the dissipation condition \eqref{eq:limxh=xt}. Unlike the conservation condition \eqref{eq:sumf'xh=0}, however, \eqref{eq:limxh=xt} is about where the $\hat{x}_i(k)$'s should approach as $k\rightarrow\infty$, which nodes $u_1(k)$ and $u_2(k)$ cannot guarantee themselves since they are only responsible for two of the $N$ $\hat{x}_i(k)$'s. Nevertheless, given that all the $N$ $\hat{x}_i(k)$'s should approach the {\em same} limit, nodes $u_1(k)$ and $u_2(k)$ can help make this happen by imposing an {\em equalizing condition}
\begin{align}
\hat{x}_{u_1(k)}(k)=\hat{x}_{u_2(k)}(k),\quad\forall k\in\mathbb{P}.\label{eq:xh=xh}
\end{align}
With \eqref{eq:xh=xh} added, there are now two equations with two variables, providing nodes $u_1(k)$ and $u_2(k)$ a chance to uniquely determine $\hat{x}_{u_1(k)}(k)$ and $\hat{x}_{u_2(k)}(k)$ from \eqref{eq:f'xhf'xh=f'xhf'xh} and \eqref{eq:xh=xh}.

The following proposition asserts that \eqref{eq:f'xhf'xh=f'xhf'xh} and \eqref{eq:xh=xh} always have a unique solution, so that the evolution of the $\hat{x}_i(k)$'s is well-defined:

\begin{proposition}\label{pro:xhwelldef}
With Assumption~\ref{asm:fi} and \eqref{eq:xh0=x*}--\eqref{eq:xh=xh}, $\hat{x}_i(k)$ $\forall k\in\mathbb{N}$ $\forall i\in\mathcal{V}$ are well-defined, i.e., unambiguous and in $\mathcal{X}$. Moreover, $[\min\limits_{i\in\mathcal{V}}\hat{x}_i(k),\max\limits_{i\in\mathcal{V}}\hat{x}_i(k)]\subset[\min\limits_{i\in\mathcal{V}}\hat{x}_i(k-1),\max\limits_{i\in\mathcal{V}}\hat{x}_i(k-1)]$ $\forall k\in\mathbb{P}$.
\end{proposition}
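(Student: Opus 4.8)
The plan is to prove both assertions simultaneously by induction on $k$, with Lemma~\ref{lem:exisuniqz} (specialized to $n=2$) supplying the decisive existence-and-uniqueness step at each iteration. First I would record the elementary observation that, under Assumption~\ref{asm:fi}, every $f_i'$ is both continuous (from continuous differentiability of $f_i$) and strictly increasing on $\mathcal{X}$ (from strict convexity of $f_i$), so that each $f_i'$ is eligible to play the role of a function $g_i$ in Lemma~\ref{lem:exisuniqz}.

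For the induction, the base case $k=0$ is immediate: \eqref{eq:xh0=x*} sets $\hat{x}_i(0)=x_i^*$, which by Assumption~\ref{asm:fi} is a well-defined element of $\mathcal{X}$ for every $i\in\mathcal{V}$. For the inductive step, assume $\hat{x}_i(k-1)\in\mathcal{X}$ is well-defined for all $i\in\mathcal{V}$. Using the equalizing condition \eqref{eq:xh=xh}, write $z:=\hat{x}_{u_1(k)}(k)=\hat{x}_{u_2(k)}(k)$; then \eqref{eq:f'xhf'xh=f'xhf'xh} becomes $f_{u_1(k)}'(z)+f_{u_2(k)}'(z)=f_{u_1(k)}'(\hat{x}_{u_1(k)}(k-1))+f_{u_2(k)}'(\hat{x}_{u_2(k)}(k-1))$. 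Applying Lemma~\ref{lem:exisuniqz} with $n=2$, $g_1=f_{u_1(k)}'$, $g_2=f_{u_2(k)}'$, $z_1=\hat{x}_{u_1(k)}(k-1)$, and $z_2=\hat{x}_{u_2(k)}(k-1)$ — all of which are legitimate by the inductive hypothesis — yields a \emph{unique} $z\in\mathcal{X}$ satisfying this equation, together with the bound $z\in[\min\{z_1,z_2\},\max\{z_1,z_2\}]$. For every $i\in\mathcal{V}-u(k)$, \eqref{eq:xh=xh/u} gives $\hat{x}_i(k)=\hat{x}_i(k-1)\in\mathcal{X}$. Hence all $\hat{x}_i(k)$ are unambiguous and lie in $\mathcal{X}$, closing the induction on the first claim.

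The interval containment is then read off from the same step. For $i\in\mathcal{V}-u(k)$, $\hat{x}_i(k)=\hat{x}_i(k-1)$ trivially lies in $[\min_{j\in\mathcal{V}}\hat{x}_j(k-1),\max_{j\in\mathcal{V}}\hat{x}_j(k-1)]$; for $i\in u(k)$, the bound just obtained gives $\hat{x}_i(k)=z\in[\min\{z_1,z_2\},\max\{z_1,z_2\}]\subset[\min_{j\in\mathcal{V}}\hat{x}_j(k-1),\max_{j\in\mathcal{V}}\hat{x}_j(k-1)]$ as well. Therefore $\min_{j\in\mathcal{V}}\hat{x}_j(k-1)\le\hat{x}_i(k)\le\max_{j\in\mathcal{V}}\hat{x}_j(k-1)$ for every $i\in\mathcal{V}$, and taking the minimum and maximum over $i$ on the left yields $[\min_{i\in\mathcal{V}}\hat{x}_i(k),\max_{i\in\mathcal{V}}\hat{x}_i(k)]\subset[\min_{i\in\mathcal{V}}\hat{x}_i(k-1),\max_{i\in\mathcal{V}}\hat{x}_i(k-1)]$.

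I do not expect a genuine obstacle here: the argument is essentially bookkeeping of an induction plus one invocation of Lemma~\ref{lem:exisuniqz}. The only point needing care is ordering the logic correctly — the hypotheses of Lemma~\ref{lem:exisuniqz} at iteration $k$ require $\hat{x}_{u_1(k)}(k-1),\hat{x}_{u_2(k)}(k-1)\in\mathcal{X}$, which is precisely what well-definedness at step $k-1$ provides, so the well-definedness claim must be carried through the induction \emph{before} the lemma is applied at step $k$, and the interval bound then drops out as a byproduct rather than being argued separately.
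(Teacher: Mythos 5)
Your proof is correct and follows essentially the same route as the paper's: induction on $k$, with Lemma~\ref{lem:exisuniqz} (for $n=2$) supplying existence, uniqueness, and the interval bound for the gossiping pair, and \eqref{eq:xh=xh/u} handling the idle nodes. The only cosmetic difference is that the paper spells out uniqueness of the solution pair to the system \eqref{eq:f'xhf'xh=f'xhf'xh}--\eqref{eq:xh=xh} via two candidate solutions, whereas you fold this into the substitution of the equalizing condition before invoking the lemma; the two are logically equivalent.
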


\begin{proof}
By induction on $k\in\mathbb{N}$. By Assumption~\ref{asm:fi} and \eqref{eq:xh0=x*}, $\hat{x}_i(0)$ $\forall i\in\mathcal{V}$ are unambiguous and in $\mathcal{X}$. Next, let $k\in\mathbb{P}$ and suppose $\hat{x}_i(k-1)$ $\forall i\in\mathcal{V}$ are unambiguous and in $\mathcal{X}$. We show that so are $\hat{x}_i(k)$ $\forall i\in\mathcal{V}$. From \eqref{eq:xh=xh/u}, $\hat{x}_i(k)$ $\forall i\in\mathcal{V}-u(k)$ are unambiguous and in $\mathcal{X}$. To show that so are $\hat{x}_{u_1(k)}(k)$ and $\hat{x}_{u_2(k)}(k)$, we show that \eqref{eq:f'xhf'xh=f'xhf'xh} and \eqref{eq:xh=xh} have a unique solution $(\hat{x}_{u_1(k)}(k),\hat{x}_{u_2(k)}(k))\in\mathcal{X}^2$. By Lemma~\ref{lem:exisuniqz}, there is a unique $z\in\mathcal{X}$ such that
\begin{align}
f_{u_1(k)}'(z)+f_{u_2(k)}'(z)=f_{u_1(k)}'(\hat{x}_{u_1(k)}(k-1))+f_{u_2(k)}'(\hat{x}_{u_2(k)}(k-1)),\label{eq:f'zf'z=f'xhf'xh}
\end{align}
which satisfies $z\in[\min_{i\in u(k)}\hat{x}_i(k-1),\max_{i\in u(k)}\hat{x}_i(k-1)]$. Setting $\hat{x}_{u_1(k)}(k)=\hat{x}_{u_2(k)}(k)=z$, we see that $(\hat{x}_{u_1(k)}(k),\hat{x}_{u_2(k)}(k))$ is a solution to \eqref{eq:f'xhf'xh=f'xhf'xh} and \eqref{eq:xh=xh}, confirming the existence. Now let $(a_1,a_2)\in\mathcal{X}^2$ and $(b_1,b_2)\in\mathcal{X}^2$ be two solutions of \eqref{eq:f'xhf'xh=f'xhf'xh} and \eqref{eq:xh=xh}. Then, due to \eqref{eq:xh=xh}, \eqref{eq:f'xhf'xh=f'xhf'xh}, and Lemma~\ref{lem:exisuniqz}, we have $a_1=a_2=b_1=b_2$, confirming the uniqueness. Therefore, $\hat{x}_i(k)$ $\forall i\in\mathcal{V}$ are well-defined as desired. Finally, the second statement follows from \eqref{eq:xh=xh/u} and the fact that $\hat{x}_{u_1(k)}(k)=\hat{x}_{u_2(k)}(k)\in[\min_{i\in u(k)}\hat{x}_i(k-1),\max_{i\in u(k)}\hat{x}_i(k-1)]$ $\forall k\in\mathbb{P}$.
\end{proof}

Proposition~\ref{pro:xhwelldef} calls for a few remarks. First, the interval $[\min_{i\in\mathcal{V}}\hat{x}_i(k),\max_{i\in\mathcal{V}}\hat{x}_i(k)]$ can only shrink or remain unchanged over time $k$. While this does not guarantee the dissipation condition \eqref{eq:limxh=xt}, it shows that the $\hat{x}_i(k)$'s are ``trying'' to converge and are, at the very least, bounded even if $\mathcal{X}$ is not. Second, the proofs of Proposition~\ref{pro:xhwelldef} and Lemma~\ref{lem:exisuniqz} suggest a simple, practical procedure for nodes $u_1(k)$ and $u_2(k)$ to solve \eqref{eq:f'xhf'xh=f'xhf'xh} and \eqref{eq:xh=xh} for $(\hat{x}_{u_1(k)}(k),\hat{x}_{u_2(k)}(k))$: apply a numerical {\em root-finding method}, such as the {\em bisection method} with initial bracket $[\min_{i\in u(k)}\hat{x}_i(k-1),\max_{i\in u(k)}\hat{x}_i(k-1)]$, to solve \eqref{eq:f'zf'z=f'xhf'xh} for the unique $z$ and then set $\hat{x}_{u_1(k)}(k)=\hat{x}_{u_2(k)}(k)=z$. Finally, since \eqref{eq:f'zf'z=f'xhf'xh} always has a unique solution $z$, we can eliminate $z$ and write
\begin{align}
\hat{x}_{u_1(k)}(k)=\hat{x}_{u_2(k)}(k)=(f_{u_1(k)}'+f_{u_2(k)}')^{-1}(f_{u_1(k)}'(\hat{x}_{u_1(k)}(k-1))+f_{u_2(k)}'(\hat{x}_{u_2(k)}(k-1))),\quad\forall k\in\mathbb{P},\label{eq:xh=xh=f'f'invf'xhf'xh}
\end{align}
where $(f_i'+f_j')^{-1}:(f_i'+f_j')(\mathcal{X})\rightarrow\mathcal{X}$ denotes the inverse of the injective function $f_i'+f_j'$ with its codomain restricted to its range.

Expressions \eqref{eq:xh0=x*}, \eqref{eq:xh=xh/u}, and \eqref{eq:xh=xh=f'f'invf'xhf'xh} collectively define a gossip-style, distributed asynchronous iterative algorithm that yields a switched, nonlinear, networked dynamical system
\begin{align}
\hat{x}_i(k)=\begin{cases}(\sum_{j\in u(k)}f_j')^{-1}(\sum_{j\in u(k)}f_j'(\hat{x}_j(k-1))), & \text{if $i\in u(k)$},\\ \hat{x}_i(k-1), & \text{otherwise},\end{cases}\quad\forall k\in\mathbb{P},\;\forall i\in\mathcal{V},\label{eq:xh=sumf'invsumf'xhifiinuxh}
\end{align}
with initial condition \eqref{eq:xh0=x*}, and with $(u(k))_{k=1}^\infty$ representing the sequence of gossiping nodes that trigger the switchings. As this algorithm ensures the conservation condition \eqref{eq:sumf'xh=0}, the state trajectory $(\hat{x}_1(k),\hat{x}_2(k),\ldots,\hat{x}_N(k))$ must remain on an $(N-1)$-dimensional manifold $\mathcal{M}=\{(x_1,x_2,\ldots,x_N)\in\mathcal{X}^N:\sum_{i\in\mathcal{V}}f_i'(x_i)=0\}\subset\mathcal{X}^N\subset\mathbb{R}^N$, making $\mathcal{M}$ an invariant set. Given that the algorithm involves repeated, pairwise equalizing of the $\hat{x}_i(k)$'s, we refer to it as {\em Pairwise Equalizing} (PE). PE may be expressed in a compact algorithmic form as follows:

\begin{algorithm}[Pairwise Equalizing]\label{alg:PE}
\begin{algorithminit}{}
\item Each node $i\in\mathcal{V}$ computes $x_i^*\in\mathcal{X}$, creates a variable $\hat{x}_i\in\mathcal{X}$, and sets $\hat{x}_i\leftarrow x_i^*$.
\end{algorithminit}
\begin{algorithmoper}{At each iteration:}
\item A node with one or more one-hop neighbors, say, node $i$, initiates the iteration and selects a one-hop neighbor, say, node $j$, to gossip. Nodes $i$ and $j$ select one of two ways to gossip by labeling themselves as either nodes $a$ and $b$, or nodes $b$ and $a$, respectively, where $\{a,b\}=\{i,j\}$. If node $b$ does not know $f_a$, node $a$ transmits $f_a$ to node $b$. Node $a$ transmits $\hat{x}_a$ to node $b$. Node $b$ sets $\hat{x}_b\leftarrow(f_a'+f_b')^{-1}(f_a'(\hat{x}_a)+f_b'(\hat{x}_b))$ and transmits $\hat{x}_b$ to node $a$. Node $a$ sets $\hat{x}_a\leftarrow\hat{x}_b$.
\end{algorithmoper}
\end{algorithm}

Due to space limitations, we omit remarks concerning the execution of Algorithm~\ref{alg:PE} and refer the reader to an earlier, conference version of this paper \cite{LuJ10}.

Notice that PE does not rely on a stepsize parameter to execute, nor does it require the construction of a (pseudo-)Hamiltonian cycle, as well as the concurrent use of a routing protocol for multi-hop transmissions. Indeed, all it essentially needs is that every node is capable of applying a root-finding method, maintaining a list of its one-hop neighbors, and remembering the functions it learns along the way. Therefore, PE overcomes limitations~L1--L3, while being rather easy to implement---although computationally it is more demanding than the subgradient algorithms.

To show that PE asymptotically converges and, thus, circumvents~L4, let $\mathbf{x}^*=(x^*,x^*,\ldots,x^*)$ and $\mathbf{x}(k)=(\hat{x}_1(k),\hat{x}_2(k),\ldots,\hat{x}_N(k))$. Then, from Propositions~\ref{pro:exisuniqx*} and~\ref{pro:xhwelldef}, $\mathbf{x}^*\in\mathcal{X}^N$ and $\mathbf{x}(k)\in\mathcal{X}^N$ $\forall k\in\mathbb{N}$. In addition, due to \eqref{eq:xh=sumf'invsumf'xhifiinuxh}, if $\mathbf{x}(k)=\mathbf{x}^*$ for some $k\in\mathbb{N}$, then $\mathbf{x}(\ell)=\mathbf{x}^*$ $\forall\ell>k$. Hence, $\mathbf{x}^*$ is an equilibrium point of the system \eqref{eq:xh=sumf'invsumf'xhifiinuxh}. To show that $\lim_{k\rightarrow\infty}\mathbf{x}(k)=\mathbf{x}^*$, i.e., \eqref{eq:limxh=x*} holds, we seek to construct a Lyapunov function. To this end, recall that for any strictly convex and differentiable function $f:\mathcal{X}\rightarrow\mathbb{R}$, the first-order convexity condition says that
\begin{align}
f(y)\ge f(x)+f'(x)(y-x),\quad\forall x,y\in\mathcal{X},\label{eq:fy>=fxf'xyx}
\end{align}
where the equality holds if and only if $x=y$. This suggests the following Lyapunov function candidate $V:\mathcal{X}^N\subset\mathbb{R}^N\rightarrow\mathbb{R}$, which exploits the convexity of the $f_i$'s:
\begin{align}
V(\mathbf{x}(k))=\sum_{i\in\mathcal{V}}f_i(x^*)-f_i(\hat{x}_i(k))-f_i'(\hat{x}_i(k))(x^*-\hat{x}_i(k)).\label{eq:V=sumfx*fxhf'xhx*xh}
\end{align}
Notice that $V$ in \eqref{eq:V=sumfx*fxhf'xhx*xh} is well-defined. Moreover, due to Assumption~\ref{asm:fi} and \eqref{eq:fy>=fxf'xyx}, $V$ is continuous and positive definite with respect to $\mathbf{x}^*$, i.e., $V(\mathbf{x}(k))\ge0$ $\forall\mathbf{x}(k)\in\mathcal{X}^N$, where the equality holds if and only if $\mathbf{x}(k)=\mathbf{x}^*$. Therefore, to prove \eqref{eq:limxh=x*}, it suffices to show that
\begin{align}
\lim_{k\rightarrow\infty}V(\mathbf{x}(k))=0.\label{eq:limV=0}
\end{align}

The following lemma represents the first step toward establishing \eqref{eq:limV=0}:

\begin{lemma}\label{lem:PEVnonincr}
Consider the use of PE described in Algorithm~\ref{alg:PE}. Suppose Assumption~\ref{asm:fi} holds. Then, for any given $(u(k))_{k=1}^\infty$, $(V(\mathbf{x}(k)))_{k=0}^\infty$ is non-increasing and satisfies
\begin{align}
V(\mathbf{x}(k))-V(\mathbf{x}(k-1))=-\sum_{i\in u(k)}f_i(\hat{x}_i(k))-f_i(\hat{x}_i(k-1))-f_i'(\hat{x}_i(k-1))(\hat{x}_i(k)&-\hat{x}_i(k-1)),\nonumber\\
&\quad\forall k\in\mathbb{P}.\label{eq:VV=sumfxhfxhf'xhxhxh}
\end{align}
\end{lemma}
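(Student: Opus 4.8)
The plan is to compute $V(\mathbf{x}(k)) - V(\mathbf{x}(k-1))$ directly from the definition \eqref{eq:V=sumfx*fxhf'xhx*xh} and then show that the resulting expression is nonnegative. First I would write both $V(\mathbf{x}(k))$ and $V(\mathbf{x}(k-1))$ as sums over all $i \in \mathcal{V}$ of the terms $f_i(x^*) - f_i(\hat{x}_i(\cdot)) - f_i'(\hat{x}_i(\cdot))(x^* - \hat{x}_i(\cdot))$, and subtract. Since $\hat{x}_i(k) = \hat{x}_i(k-1)$ for every $i \in \mathcal{V} - u(k)$ by \eqref{eq:xh=xh/u}, those terms cancel, leaving only a sum over the two gossiping nodes $i \in u(k)$. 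The $f_i(x^*)$ terms also cancel within that sum. What remains is
\begin{align}
V(\mathbf{x}(k)) - V(\mathbf{x}(k-1)) = \sum_{i\in u(k)} \bigl[ f_i(\hat{x}_i(k-1)) - f_i(\hat{x}_i(k)) + f_i'(\hat{x}_i(k-1))(x^* - \hat{x}_i(k-1)) - f_i'(\hat{x}_i(k))(x^* - \hat{x}_i(k)) \bigr]. \nonumber
\end{align}

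The next step is to simplify the two first-derivative terms. Here the key is the conservation condition in the pairwise form \eqref{eq:f'xhf'xh=f'xhf'xh}: the $x^*$ coefficients across the two nodes combine into $x^* \bigl(\sum_{i\in u(k)} f_i'(\hat{x}_i(k-1)) - \sum_{i\in u(k)} f_i'(\hat{x}_i(k))\bigr) = 0$, so all the $x^*$ dependence drops out. I then regroup the surviving terms so that each node's contribution reads $f_i(\hat{x}_i(k-1)) - f_i(\hat{x}_i(k)) - f_i'(\hat{x}_i(k-1))(\hat{x}_i(k-1) - \hat{x}_i(k))$ — wait, I need to be careful with signs; after cancelling the $x^*$ pieces the leftover is $-f_i'(\hat{x}_i(k))(- \hat{x}_i(k)) + f_i'(\hat{x}_i(k-1))(-\hat{x}_i(k-1))$ combined with the equalizing condition \eqref{eq:xh=xh}, which says $\hat{x}_i(k)$ takes the common value $z$ for both $i \in u(k)$. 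Using \eqref{eq:f'xhf'xh=f'xhf'xh} once more to rewrite $\sum_{i\in u(k)} f_i'(\hat{x}_i(k)) \hat{x}_i(k) = z \sum_{i\in u(k)} f_i'(\hat{x}_i(k)) = z \sum_{i\in u(k)} f_i'(\hat{x}_i(k-1)) = \sum_{i\in u(k)} f_i'(\hat{x}_i(k-1))\hat{x}_i(k)$, the cross terms line up node-by-node and I arrive at exactly \eqref{eq:VV=sumfxhfxhf'xhxhxh}.

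Finally, to conclude that $(V(\mathbf{x}(k)))_{k=0}^\infty$ is non-increasing, I apply the first-order convexity inequality \eqref{eq:fy>=fxf'xyx} to each $f_i$ with $x = \hat{x}_i(k-1)$ and $y = \hat{x}_i(k)$: this gives $f_i(\hat{x}_i(k)) - f_i(\hat{x}_i(k-1)) - f_i'(\hat{x}_i(k-1))(\hat{x}_i(k) - \hat{x}_i(k-1)) \ge 0$ for each $i \in u(k)$, so the right-hand side of \eqref{eq:VV=sumfxhfxhf'xhxhxh} is a negative of a sum of nonnegative quantities, hence $\le 0$. The main obstacle is bookkeeping in the middle step: correctly tracking the signs when the $x^*$-dependent terms are eliminated via the conservation identity and when the equalizing condition is used to collapse the mixed products $f_i'(\hat{x}_i(k-1))\hat{x}_i(k)$ into a form that pairs cleanly with the $f_i'(\hat{x}_i(k-1))\hat{x}_i(k-1)$ terms. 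Once that algebra is done carefully, the convexity step and the non-increasing claim are immediate.
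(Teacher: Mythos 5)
Your proposal is correct and follows essentially the same route as the paper's proof: expand $V(\mathbf{x}(k))-V(\mathbf{x}(k-1))$ from \eqref{eq:V=sumfx*fxhf'xhx*xh}, cancel the idle-node and $f_i(x^*)$ terms, use the conservation identity \eqref{eq:f'xhf'xh=f'xhf'xh} to eliminate the $x^*$-dependent terms, use the equalizing condition \eqref{eq:xh=xh} together with \eqref{eq:f'xhf'xh=f'xhf'xh} to replace $\sum_{i\in u(k)}f_i'(\hat{x}_i(k))\hat{x}_i(k)$ by $\sum_{i\in u(k)}f_i'(\hat{x}_i(k-1))\hat{x}_i(k)$, and then invoke \eqref{eq:fy>=fxf'xyx} for nonpositivity. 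The sign bookkeeping you flag works out exactly as you describe, so no gap remains.
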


\begin{proof}
Let $(u(k))_{k=1}^\infty$ be given. Then, from \eqref{eq:V=sumfx*fxhf'xhx*xh} and \eqref{eq:xh=sumf'invsumf'xhifiinuxh}, we have $V(\mathbf{x}(k))-V(\mathbf{x}(k-1))=-\sum_{i\in u(k)}f_i(\hat{x}_i(k))-f_i(\hat{x}_i(k-1))+f_i'(\hat{x}_i(k))x^*-f_i'(\hat{x}_i(k-1))x^*-f_i'(\hat{x}_i(k))\hat{x}_i(k)+f_i'(\hat{x}_i(k-1))\hat{x}_i(k-1)$ $\forall k\in\mathbb{P}$. Due to \eqref{eq:xh=sumf'invsumf'xhifiinuxh}, $-\sum_{i\in u(k)}f_i'(\hat{x}_i(k))x^*$ cancels $\sum_{i\in u(k)}f_i'(\hat{x}_i(k-1))x^*$, while $\sum_{i\in u(k)}f_i'(\hat{x}_i(k))\hat{x}_i(k)$ becomes $\sum_{i\in u(k)}f_i'(\hat{x}_i(k-1))\hat{x}_i(k)$. This proves \eqref{eq:VV=sumfxhfxhf'xhxhxh}. Note that the right-hand side of \eqref{eq:VV=sumfxhfxhf'xhxhxh} is nonpositive due to \eqref{eq:fy>=fxf'xyx}. Hence, $(V(\mathbf{x}(k)))_{k=0}^\infty$ is non-increasing.
\end{proof}

Lemma~\ref{lem:PEVnonincr} has several implications. First, upon completing each iteration $k\in\mathbb{P}$ by {\em any} two nodes $u_1(k)$ and $u_2(k)$, the value of $V$ must either decrease or, at worst, stay the same, where the latter occurs if and only if $\hat{x}_{u_1(k)}(k-1)=\hat{x}_{u_2(k)}(k-1)$. Second, since $(V(\mathbf{x}(k)))_{k=0}^\infty$ is non-increasing irrespective of $(u(k))_{k=1}^\infty$, $V$ in \eqref{eq:V=sumfx*fxhf'xhx*xh} may be regarded as a {\em common} Lyapunov function for the nonlinear switched system \eqref{eq:xh=sumf'invsumf'xhifiinuxh}, which has as many as $\frac{N(N-1)}{2}$ different dynamics, corresponding to the $\frac{N(N-1)}{2}$ possible gossiping pairs. Finally, the first-order convexity condition \eqref{eq:fy>=fxf'xyx} can be used not only to form the common Lyapunov function $V$, but also to characterize drops in its value in \eqref{eq:VV=sumfxhfxhf'xhxhxh} after every gossip. This is akin to how quadratic functions may be used to form a common Lyapunov function $V(k)=x^T(k)Px(k)$ for a linear switched system $x(k+1)=A(k)x(k)$, $A(k)\in\{A_1,A_2,\ldots,A_M\}$, as well as to characterize drops in $V(k)$ via $V(k+1)-V(k)=x^T(k)(A_i^TPA_i-P)x(k)=-x^T(k)Q_ix(k)$. Indeed, as we will show later, when problem \eqref{eq:minF} specializes to an averaging problem, where the nonlinear switched system \eqref{eq:xh=sumf'invsumf'xhifiinuxh} becomes linear, both $V$ and its drop become quadratic functions.

As $(V(\mathbf{x}(k)))_{k=0}^\infty$ is nonnegative and non-increasing, $\lim_{k\rightarrow\infty}V(\mathbf{x}(k))$ exists and is nonnegative. This, however, is insufficient for us to conclude that $\lim_{k\rightarrow\infty}V(\mathbf{x}(k))=0$, since, for some pathological gossiping patterns, $\lim_{k\rightarrow\infty}V(\mathbf{x}(k))$ can be positive (see \cite{LuJ10} for examples). Thus, some restrictions must be imposed on the gossiping pattern, in order to establish \eqref{eq:limV=0}. To this end, let $\mathcal{E}_\infty=\{\{i,j\}:u(k)=\{i,j\}\;\text{for infinitely many}\;k\in\mathbb{P}\}$, so that a link $\{i,j\}$ is in $\mathcal{E}_\infty$ if and only if nodes $i$ and $j$ gossip with each other infinitely often. Then, we may state the following restriction on the gossiping pattern, which was first adopted in \cite{Tsitsiklis84} and is not difficult to satisfy in practice \cite{LuJ10}:

\begin{assumption}\label{asm:PEnetconn}
The sequence $(u(k))_{k=1}^\infty$ is such that the graph $(\mathcal{V},\mathcal{E}_\infty)$ is connected.
\end{assumption}

The following theorem says that, under Assumption~\ref{asm:PEnetconn} on the gossiping pattern, PE ensures asymptotic convergence of all the $\hat{x}_i(k)$'s to $x^*$, circumventing limitation~L4:

\begin{theorem}\label{thm:PEasymconv}
Consider the use of PE described in Algorithm~\ref{alg:PE}. Suppose Assumptions~\ref{asm:fi} and~\ref{asm:PEnetconn} hold. Then, \eqref{eq:limV=0} and \eqref{eq:limxh=x*} hold.
\end{theorem}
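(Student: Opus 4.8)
The plan is to combine the non-increasing, nonnegative sequence $(V(\mathbf{x}(k)))_{k=0}^\infty$ from Lemma~\ref{lem:PEVnonincr} with a LaSalle-type argument powered by Assumption~\ref{asm:PEnetconn}. Since $(V(\mathbf{x}(k)))_{k=0}^\infty$ is bounded below by $0$ and non-increasing, $V^\star := \lim_{k\rightarrow\infty}V(\mathbf{x}(k))$ exists and is finite; the whole task is to rule out $V^\star>0$. By Proposition~\ref{pro:xhwelldef} the trajectory $(\mathbf{x}(k))_{k=0}^\infty$ lives in the compact set $[\min_{i\in\mathcal{V}}\hat{x}_i(0),\max_{i\in\mathcal{V}}\hat{x}_i(0)]^N$, so it has a convergent subsequence $\mathbf{x}(k_m)\rightarrow\mathbf{y}=(y_1,\ldots,y_N)$ for some $\mathbf{y}$ in that box; by continuity of $V$ we have $V(\mathbf{y})=V^\star$. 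The goal is then to show $\mathbf{y}=\mathbf{x}^*$, i.e., $y_1=\cdots=y_N$ (the common value being forced to equal $x^*$ by the conservation condition \eqref{eq:sumf'xh=0} together with Proposition~\ref{pro:exisuniqx*}, exactly as in the discussion preceding the theorem). That reduces \eqref{eq:limV=0} and \eqref{eq:limxh=x*} to showing that the limit point $\mathbf{y}$ is a consensus vector.

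To get the consensus property I would argue by contradiction: suppose $y_i\neq y_j$ for some $i,j$. First I would like all components to converge, not just along a subsequence; here is where the structure of PE and Assumption~\ref{asm:PEnetconn} enter. The key quantitative fact is that the per-iteration drop in \eqref{eq:VV=sumfxhfxhf'xhxhxh}, namely $-\sum_{i\in u(k)}[f_i(\hat{x}_i(k))-f_i(\hat{x}_i(k-1))-f_i'(\hat{x}_i(k-1))(\hat{x}_i(k)-\hat{x}_i(k-1))]$, is a continuous function $\delta_{\{i,j\}}(\hat{x}_i(k-1),\hat{x}_j(k-1))$ of the two gossiping nodes' pre-iteration states that is strictly positive whenever those two states differ (by the equality case of the first-order convexity condition \eqref{eq:fy>=fxf'xyx}, since the post-iteration common value $z$ is strictly between $\hat{x}_i(k-1)$ and $\hat{x}_j(k-1)$ unless they are already equal). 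Because $\sum_{k=1}^\infty\delta_{u(k)}(\hat{x}_{u_1(k)}(k-1),\hat{x}_{u_2(k)}(k-1)) = V(\mathbf{x}(0)) - V^\star < \infty$, the summand must tend to $0$ along the whole sequence. Hence for every link $\{i,j\}\in\mathcal{E}_\infty$ — which by definition is used at infinitely many times $k$ — we get $\hat{x}_i(k-1)-\hat{x}_j(k-1)\rightarrow 0$ along that infinite subsequence of times. Then, using the monotone-shrinking-interval property from Proposition~\ref{pro:xhwelldef} and a careful bookkeeping argument that propagates this "neighboring-states-agree-in-the-limit" fact from infinitely-often-used links to all times (not merely the times at which a given link is used), I would conclude that $|\hat{x}_i(k)-\hat{x}_j(k)|\rightarrow 0$ as $k\rightarrow\infty$ for every edge $\{i,j\}\in\mathcal{E}_\infty$; since $(\mathcal{V},\mathcal{E}_\infty)$ is connected, chaining along a path shows $|\hat{x}_i(k)-\hat{x}_j(k)|\rightarrow 0$ for all $i,j\in\mathcal{V}$. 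Combined with the bounded trajectory and any subsequential limit $\mathbf{y}$, this forces $\mathbf{y}$ to be a consensus vector, hence $\mathbf{y}=\mathbf{x}^*$, hence $V^\star = V(\mathbf{x}^*)=0$, giving \eqref{eq:limV=0}; finally positive definiteness of $V$ with respect to $\mathbf{x}^*$ upgrades \eqref{eq:limV=0} to \eqref{eq:limxh=x*}.

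The main obstacle I anticipate is the bookkeeping step that converts the "drops vanish" conclusion into a uniform statement that holds at all times rather than only along the times a particular edge is active. The subtlety is that between two consecutive uses of edge $\{i,j\}$, nodes $i$ and $j$ may each participate in many other gossips that move $\hat{x}_i$ and $\hat{x}_j$, so one cannot immediately say $\hat{x}_i(k)-\hat{x}_j(k)$ stays small for intermediate $k$. To handle this I would work with the ordered components of $\mathbf{x}(k)$, say $\hat{x}_{(1)}(k)\le\cdots\le\hat{x}_{(N)}(k)$: Proposition~\ref{pro:xhwelldef} gives that $\hat{x}_{(1)}(k)$ is non-decreasing and $\hat{x}_{(N)}(k)$ is non-increasing, so both converge; the quantity $\hat{x}_{(N)}(k)-\hat{x}_{(1)}(k)$ is non-increasing and converges to some $D\ge 0$, and the whole problem becomes showing $D=0$. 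Then I would argue that if $D>0$, the connectedness of $(\mathcal{V},\mathcal{E}_\infty)$ guarantees that infinitely often some edge straddling a persistent "gap" in the sorted values is used, each such use producing a drop bounded below by a fixed positive constant (by continuity of $\delta$ on the compact state set and strict positivity off the diagonal), contradicting summability of the drops. Getting this last compactness-plus-continuity lower bound clean — i.e., that the drops along gap-straddling gossips do not themselves shrink to zero — is the technical heart of the argument, and it is exactly where strict convexity of the $f_i$'s (not mere convexity) is indispensable.
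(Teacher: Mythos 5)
Your proposal is sound and its engine is the same as the paper's: argue by contradiction, show that small (or summable) per-gossip drops in $V$ force any two gossiping nodes to have nearly equal states, and then show that a residual spread would create a gap in the sorted states that no gossip ever crosses, so the node set splits into two permanently separated classes $\mathcal{C}_1,\mathcal{C}_2$ with no $\mathcal{E}_\infty$-edge between them, contradicting Assumption~\ref{asm:PEnetconn}. The differences are organizational rather than substantive. Where you invoke compactness and continuity of the drop $\delta_{\{i,j\}}$ to get a uniform positive lower bound on drops for separated pairs, the paper proves the equivalent quantitative statement as Lemma~\ref{lem:exisgamm}: a class-$\mathcal{K}_\infty$ function $\gamma$ with $f_i(y)-f_i(x)-f_i'(x)(y-x)\le\eta\Rightarrow|y-x|\le\gamma^{-1}(\eta)$ on the compact box $[a,b]^2$ (built via $\min_{(x,y):|y-x|=d}g_i(x,y)$, i.e., the same extreme-value argument). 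Where you close the argument by proving consensus ($D=0$), identifying the limit via the conservation condition \eqref{eq:sumf'xh=0}, and only then concluding \eqref{eq:limV=0}, the paper instead picks $\epsilon=\gamma(\tfrac{c}{4\beta N^2})$, bounds the spread at some $k_1$ by $2(N-1)\gamma^{-1}(\epsilon)$, and uses the Lipschitz-type upper bound of Lemma~\ref{lem:exisbeta} to force $V(\mathbf{x}(k_1))<c$ directly; your route dispenses with that second lemma, at the price of the extra dissipation-plus-conservation step from Section~\ref{sec:PE}. One point you should make explicit rather than leave as ``careful bookkeeping'': the reason a gap, once uncrossable, defines a \emph{fixed} persistent partition is the convex-hull property of the update (Lemma~\ref{lem:exisuniqz} and Proposition~\ref{pro:xhwelldef}: the equalized value lies in $[\min_{i\in u(k)}\hat{x}_i(k-1),\max_{i\in u(k)}\hat{x}_i(k-1)]$), so an induction shows $\mathcal{C}_1$ stays below and $\mathcal{C}_2$ stays above for all later times even though intermediate sorted values are not individually monotone; this is exactly the induction the paper carries out.
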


\begin{proof}
See Appendix~\ref{ssec:proofthmPEasymconv}.
\end{proof}

Finally, we point out that the above results may be viewed as a natural generalization of some known results in distributed averaging. Consider a special case where each node $i\in\mathcal{V}$ observes not an arbitrary function $f_i$, but a quadratic one of the form $f_i(x)=\frac{1}{2}(x-y_i)^2+c_i$ with domain $\mathcal{X}=\mathbb{R}$ and parameters $y_i,c_i\in\mathbb{R}$. In this case, finding the unknown optimizer $x^*$ amounts to calculating the network-wide average $\frac{1}{N}\sum_{i\in\mathcal{V}}y_i$ of the node ``observations'' $y_i$'s, so that the convex optimization problem \eqref{eq:minF} becomes an averaging problem. In addition, initializing the node estimates $\hat{x}_i(0)$'s simply means setting them to the $y_i$'s, and equalizing $\hat{x}_{u_1(k)}(k)$ and $\hat{x}_{u_2(k)}(k)$ simply means averaging them, so that PE reduces to Pairwise Averaging \cite{Tsitsiklis84} and Randomized Gossip Algorithm \cite{Boyd06}. Moreover, the invariant manifold $\mathcal{M}$ becomes the invariant hyperplane $\mathcal{M}=\{(x_1,x_2,\ldots,x_N)\in\mathbb{R}^N:\sum_{i\in\mathcal{V}}x_i=\sum_{i\in\mathcal{V}}y_i\}$ in distributed averaging. Furthermore, both the common Lyapunov function $V$ in \eqref{eq:V=sumfx*fxhf'xhx*xh} and its drop in \eqref{eq:VV=sumfxhfxhf'xhxhxh} take a quadratic form: $V(\mathbf{x}(k))=\frac{1}{2}(\mathbf{x}(k)-\mathbf{x}^*)^T(\mathbf{x}(k)-\mathbf{x}^*)$ and $V(\mathbf{x}(k))-V(\mathbf{x}(k-1))=-\frac{1}{2}\mathbf{x}^T(k-1)Q_{u(k)}\mathbf{x}(k-1)$ $\forall k\in\mathbb{P}$, where $Q_{\{i,j\}}\in\mathbb{R}^{N\times N}$ is a symmetric positive semidefinite matrix whose $ii$ and $jj$ entries are $\frac{1}{2}$, $ij$ and $ji$ entries are $-\frac{1}{2}$, and all other entries are zero. Therefore, the first-order-convexity-condition-based Lyapunov function \eqref{eq:V=sumfx*fxhf'xhx*xh} generalizes the quadratic Lyapunov function in distributed averaging.

\section{Pairwise Bisectioning}\label{sec:PB}

Although PE solves problem \eqref{eq:minF} and bypasses~L1--L4, it requires one-time, one-way sharing of the $f_i$'s between gossiping nodes, which may be costly for certain $f_i$'s, or impermissible for security and privacy reasons. In this section, we develop another gossip algorithm that eliminates this requirement at the expense of more real-number transmissions per iteration.

Note that PE can be traced back to four defining equations \eqref{eq:xh0=x*}--\eqref{eq:xh=xh}, and that its drawback of having to share the $f_i$'s stems from having to solve \eqref{eq:f'xhf'xh=f'xhf'xh} and \eqref{eq:xh=xh}. To overcome this drawback, consider a gossip algorithm satisfying \eqref{eq:xh0=x*}--\eqref{eq:f'xhf'xh=f'xhf'xh} and a new condition but not \eqref{eq:xh=xh}. Assuming, without loss of generality, that $\hat{x}_{u_1(k)}(k-1)\le\hat{x}_{u_2(k)}(k-1)$ $\forall k\in\mathbb{P}$, this new condition can be stated as
\begin{align}
\hat{x}_{u_1(k)}(k-1)\le\hat{x}_{u_1(k)}(k)\le\hat{x}_{u_2(k)}(k)\le\hat{x}_{u_2(k)}(k-1),\quad\forall k\in\mathbb{P}.\label{eq:xh<=xh<=xh<=xh}
\end{align}
Termed as the {\em approaching condition}, \eqref{eq:xh<=xh<=xh<=xh} says that at each iteration $k\in\mathbb{P}$, nodes $u_1(k)$ and $u_2(k)$ force $\hat{x}_{u_1(k)}(k)$ and $\hat{x}_{u_2(k)}(k)$ to approach each other while preserving their order. Observe that the approaching condition \eqref{eq:xh<=xh<=xh<=xh} includes the equalizing condition \eqref{eq:xh=xh} as a special case. Furthermore, unlike \eqref{eq:f'xhf'xh=f'xhf'xh} and \eqref{eq:xh=xh}, \eqref{eq:f'xhf'xh=f'xhf'xh} and \eqref{eq:xh<=xh<=xh<=xh} do not uniquely determine $\hat{x}_{u_1(k)}(k)$ and $\hat{x}_{u_2(k)}(k)$. Rather, they allow $\hat{x}_{u_1(k)}(k)$ and $\hat{x}_{u_2(k)}(k)$ to increase gradually from $\hat{x}_{u_1(k)}(k-1)$ and decrease accordingly from $\hat{x}_{u_2(k)}(k-1)$, respectively, until the two become equal.

The following lemma characterizes the impact of the non-uniqueness on the value of $V$:

\begin{lemma}\label{lem:PBVnonincr}
Consider \eqref{eq:xh0=x*}--\eqref{eq:f'xhf'xh=f'xhf'xh} and \eqref{eq:xh<=xh<=xh<=xh}. Suppose Assumption~\ref{asm:fi} holds. Then, for any given $(u(k))_{k=1}^\infty$, $(V(\mathbf{x}(k)))_{k=0}^\infty$ is non-increasing. Moreover, for any given $k\in\mathbb{P}$ and $\mathbf{x}(k-1)\in\mathcal{X}^N$, $V(\mathbf{x}(k))$ strictly increases with $\hat{x}_{u_2(k)}(k)-\hat{x}_{u_1(k)}(k)$ over $[0,\hat{x}_{u_2(k)}(k-1)-\hat{x}_{u_1(k)}(k-1)]$.
\end{lemma}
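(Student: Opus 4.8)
The plan is to handle the two assertions separately, reusing the computation already performed in the proof of Lemma~\ref{lem:PEVnonincr}. For the non-increasing claim, I would first observe that the argument establishing \eqref{eq:VV=sumfxhfxhf'xhxhxh} in Lemma~\ref{lem:PEVnonincr} uses only that $\hat{x}_i(k)=\hat{x}_i(k-1)$ for $i\notin u(k)$ (from \eqref{eq:xh=xh/u}) and the conservation-on-the-pair identity \eqref{eq:f'xhf'xh=f'xhf'xh}; it does not use the equalizing condition \eqref{eq:xh=xh} at all, only the fact that $\sum_{i\in u(k)} f_i'(\hat{x}_i(k))x^* = \sum_{i\in u(k)} f_i'(\hat{x}_i(k-1))x^*$ and the analogous cross-term cancellation. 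Since \eqref{eq:xh0=x*}--\eqref{eq:f'xhf'xh=f'xhf'xh} are all assumed here, the same formula
\[
V(\mathbf{x}(k))-V(\mathbf{x}(k-1)) = -\sum_{i\in u(k)}\bigl[f_i(\hat{x}_i(k))-f_i(\hat{x}_i(k-1))-f_i'(\hat{x}_i(k-1))(\hat{x}_i(k)-\hat{x}_i(k-1))\bigr]
\]
holds verbatim, and the first-order convexity condition \eqref{eq:fy>=fxf'xyx} makes each bracketed term nonnegative, so $V$ is non-increasing. I would state this by noting that Lemma~\ref{lem:PEVnonincr}'s proof goes through unchanged.

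For the monotonicity-in-the-gap claim, I would fix $k\in\mathbb{P}$ and $\mathbf{x}(k-1)\in\mathcal{X}^N$, write $u_1=u_1(k)$, $u_2=u_2(k)$, $a=\hat{x}_{u_1}(k-1)\le b=\hat{x}_{u_2}(k-1)$, and parametrize the admissible outcomes. Because \eqref{eq:f'xhf'xh=f'xhf'xh} forces $f_{u_1}'(\hat{x}_{u_1}(k))+f_{u_2}'(\hat{x}_{u_2}(k))$ to equal the fixed constant $S=f_{u_1}'(a)+f_{u_2}'(b)$, and \eqref{eq:xh<=xh<=xh<=xh} restricts $\hat{x}_{u_1}(k)\in[a,z]$, $\hat{x}_{u_2}(k)\in[z,b]$ where $z$ is the unique equalizing value from Lemma~\ref{lem:exisuniqz}, I can use $p:=\hat{x}_{u_1}(k)$ as the free parameter on $[a,z]$; strict monotonicity of $f_{u_1}'$ and $f_{u_2}'$ makes $\hat{x}_{u_2}(k)=q(p)$ a well-defined strictly decreasing continuous function of $p$, with $q(a)=b$ and $q(z)=z$, so the gap $g(p):=q(p)-p$ is strictly decreasing from $b-a$ at $p=a$ to $0$ at $p=z$ and hence a strictly decreasing bijection onto $[0,b-a]$. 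Therefore "$V(\mathbf{x}(k))$ strictly increases with the gap" is equivalent to "$V(\mathbf{x}(k))$ strictly decreases as $p$ increases from $a$ to $z$," i.e., to showing the drop $-(V(\mathbf{x}(k))-V(\mathbf{x}(k-1)))$ is strictly increasing in $p$.

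From the drop formula above, the drop equals $D(p)=\bigl[f_{u_1}(p)-f_{u_1}(a)-f_{u_1}'(a)(p-a)\bigr]+\bigl[f_{u_2}(q(p))-f_{u_2}(b)-f_{u_2}'(b)(q(p)-b)\bigr]$. I would differentiate in $p$ (the $f_i$'s are $C^1$ and $q$ is differentiable where $f_{u_1}'+f_{u_2}'$ has nonzero derivative; more robustly I'd argue via monotonicity of difference quotients to avoid differentiability of $q$): $D'(p)=\bigl(f_{u_1}'(p)-f_{u_1}'(a)\bigr)+\bigl(f_{u_2}'(q(p))-f_{u_2}'(b)\bigr)q'(p)$. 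Now differentiate the constraint $f_{u_1}'(p)+f_{u_2}'(q(p))=S$ to get $f_{u_1}''(p)+f_{u_2}''(q(p))q'(p)=0$, which lets me substitute $q'(p)=-f_{u_1}''(p)/f_{u_2}''(q(p))$; alternatively, and cleaner, I'd rewrite $D'(p)$ using $f_{u_2}'(q(p))-f_{u_2}'(b) = -\bigl(f_{u_1}'(p)-f_{u_1}'(a)\bigr)$ from the constraint, so $D'(p)=\bigl(f_{u_1}'(p)-f_{u_1}'(a)\bigr)\bigl(1-q'(p)\bigr)$. Since $p\ge a$ gives $f_{u_1}'(p)-f_{u_1}'(a)\ge 0$ (strict for $p>a$ by strict convexity) and $q'(p)\le 0$ gives $1-q'(p)>0$, we get $D'(p)>0$ for $p\in(a,z]$, so $D$ is strictly increasing on $[a,z]$, which is exactly the claim after the change of variables to the gap.

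The main obstacle I anticipate is the smoothness bookkeeping: Assumption~\ref{asm:fi} gives only $C^1$, not $C^2$, so $q'(p)$ need not exist, and the clean identity $D'(p)=(f_{u_1}'(p)-f_{u_1}'(a))(1-q'(p))$ must be replaced by a monotonicity argument on increments. I would instead take two values $a\le p_1<p_2\le z$, let $q_1=q(p_1)>q_2=q(p_2)$, and show $D(p_2)-D(p_1)>0$ directly: the first bracket increases by $[f_{u_1}(p_2)-f_{u_1}(p_1)] - f_{u_1}'(a)(p_2-p_1) = \int_{p_1}^{p_2}(f_{u_1}'(t)-f_{u_1}'(a))\,dt > 0$, and the second bracket changes by $[f_{u_2}(q_2)-f_{u_2}(q_1)] - f_{u_2}'(b)(q_2-q_1) = -\int_{q_2}^{q_1}(f_{u_2}'(t)-f_{u_2}'(b))\,dt = \int_{q_2}^{q_1}(f_{u_2}'(b)-f_{u_2}'(t))\,dt > 0$ since $t\le b$ and $f_{u_2}'$ is strictly increasing; both terms are strictly positive (using $p_2>p_1$, $q_1>q_2$, and strict convexity), giving $D(p_2)>D(p_1)$. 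This avoids differentiating $q$ entirely and uses only the first-order convexity content already invoked in the paper.
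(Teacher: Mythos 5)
Your argument hinges on the claim that the identity \eqref{eq:VV=sumfxhfxhf'xhxhxh} from Lemma~\ref{lem:PEVnonincr} ``holds verbatim'' once the equalizing condition \eqref{eq:xh=xh} is dropped, and that claim is false. In the proof of Lemma~\ref{lem:PEVnonincr}, the cancellation of the $x^*$-terms does use only \eqref{eq:f'xhf'xh=f'xhf'xh}, but the step that turns $\sum_{i\in u(k)}f_i'(\hat{x}_i(k))\hat{x}_i(k)$ into $\sum_{i\in u(k)}f_i'(\hat{x}_i(k-1))\hat{x}_i(k)$ works only because the $\hat{x}_i(k)$, $i\in u(k)$, share a common value that can be factored out before conservation is applied. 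Without \eqref{eq:xh=xh} there is a residual term, and the correct identity is
\[
V(\mathbf{x}(k))-V(\mathbf{x}(k-1))=-\sum_{i\in u(k)}\bigl[f_i(\hat{x}_i(k))-f_i(\hat{x}_i(k-1))-f_i'(\hat{x}_i(k-1))(\hat{x}_i(k)-\hat{x}_i(k-1))\bigr]-E(k),
\]
where $E(k)=\bigl(f_{u_1(k)}'(\hat{x}_{u_1(k)}(k))-f_{u_1(k)}'(\hat{x}_{u_1(k)}(k-1))\bigr)\bigl(\hat{x}_{u_2(k)}(k)-\hat{x}_{u_1(k)}(k)\bigr)$. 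A concrete check: take $N=2$, $f_1(x)=f_2(x)=\tfrac12 x^2$, $\hat{x}_1(k-1)=-1$, $\hat{x}_2(k-1)=1$, $\hat{x}_1(k)=-t$, $\hat{x}_2(k)=t$ with $t\in[0,1]$; the true drop is $t^2-1$, your formula gives $-(1-t)^2$, and the two agree only at $t=0$ (full equalization) and $t=1$ (no move). For the non-increasing claim this error is survivable, because $E(k)\ge0$ by \eqref{eq:xh<=xh<=xh<=xh} and the strict monotonicity of $f_{u_1(k)}'$, so the true decrement is at least as large as the one you computed---but your proof as written asserts a false equality, and the missing term must be exhibited and its sign established (this is precisely what the paper's proof does).

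For the second claim the gap is not just cosmetic: your argument does not establish it. Your $D(p)$ is not $V(\mathbf{x}(k-1))-V(\mathbf{x}(k))$; the true drop is $D(p)+E(p)$ with $E(p)=(f_{u_1}'(p)-f_{u_1}'(a))(q(p)-p)$, which vanishes at both endpoints $p=a$ and $p=z$ and is positive in between, hence is not monotone in $p$. Showing $D$ strictly increasing therefore does not show $D+E$ strictly increasing, which is what the lemma requires. (In the quadratic example your claimed $V(\mathbf{x}(k))$ is $2t-t^2$ while the true value is $t^2$; both happen to increase with the gap there, but that is luck, not proof.) The paper avoids the issue by anchoring at the equalized state rather than at $\mathbf{x}(k-1)$: with $x_{\mathrm{eq}}$ the unique equalizing value---which depends only on $\mathbf{x}(k-1)$ and not on the outcome---one gets $V(\mathbf{x}(k))-V(\mathbf{x}_{\mathrm{eq}})=\sum_{i\in u(k)}\bigl[f_i(x_{\mathrm{eq}})-f_i(\hat{x}_i(k))-f_i'(\hat{x}_i(k))(x_{\mathrm{eq}}-\hat{x}_i(k))\bigr]$, and each summand strictly increases with $|x_{\mathrm{eq}}-\hat{x}_i(k)|$ for the fixed second argument $x_{\mathrm{eq}}\in[\hat{x}_{u_1(k)}(k),\hat{x}_{u_2(k)}(k)]$; since both of these distances strictly increase with the gap while $V(\mathbf{x}_{\mathrm{eq}})$ is constant, the claim follows. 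If you insist on anchoring at $\mathbf{x}(k-1)$ you must prove monotonicity of $D+E$ directly; switching anchors is the cleaner fix.
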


\begin{proof}
Let $(u(k))_{k=1}^\infty$ be given. Then, from \eqref{eq:V=sumfx*fxhf'xhx*xh}, \eqref{eq:xh=xh/u}, and \eqref{eq:f'xhf'xh=f'xhf'xh}, we have $V(\mathbf{x}(k))-V(\mathbf{x}(k-1))=-\sum_{i\in u(k)}f_i(\hat{x}_i(k))-f_i(\hat{x}_i(k-1))-f_i'(\hat{x}_i(k-1))(\hat{x}_i(k)-\hat{x}_i(k-1))+(f_i'(\hat{x}_i(k-1))-f_i'(\hat{x}_i(k)))\hat{x}_i(k)$ $\forall k\in\mathbb{P}$. Due to \eqref{eq:f'xhf'xh=f'xhf'xh} and \eqref{eq:xh<=xh<=xh<=xh}, $\sum_{i\in u(k)}(f_i'(\hat{x}_i(k-1))-f_i'(\hat{x}_i(k)))\hat{x}_i(k)=(f_{u_1(k)}'(\hat{x}_{u_1(k)}(k-1))-f_{u_1(k)}'(\hat{x}_{u_1(k)}(k)))(\hat{x}_{u_1(k)}(k)-\hat{x}_{u_2(k)}(k))\ge0$. This, along with \eqref{eq:fy>=fxf'xyx}, implies $V(\mathbf{x}(k))-V(\mathbf{x}(k-1))\le0$ $\forall k\in\mathbb{P}$. Now let $k\in\mathbb{P}$ and $\mathbf{x}(k-1)\in\mathcal{X}^N$ be given. By Lemma~\ref{lem:exisuniqz}, there exists a unique $x_{\text{eq}}\in\mathcal{X}$ such that $\sum_{i\in u(k)}f_i'(x_{\text{eq}})=\sum_{i\in u(k)}f_i'(\hat{x}_i(k))$. Also, $x_{\text{eq}}\in[\hat{x}_{u_1(k)}(k),\hat{x}_{u_2(k)}(k)]$. Let $\mathbf{x}_{\text{eq}}\in\mathcal{X}^N$ be such that its $i$th entry is $x_{\text{eq}}$ if $i\in u(k)$ and $\hat{x}_i(k-1)$ otherwise. Then, it follows from \eqref{eq:V=sumfx*fxhf'xhx*xh}, \eqref{eq:xh=xh/u}, and \eqref{eq:fy>=fxf'xyx} that $V(\mathbf{x}(k))-V(\mathbf{x}_{\text{eq}})=\sum_{i\in u(k)}f_i(x_{\text{eq}})-f_i(\hat{x}_i(k))-f_i'(\hat{x}_i(k))(x_{\text{eq}}-\hat{x}_i(k))\ge0$. Because $f_i(y)-f_i(x)-f_i'(x)(y-x)$ strictly increases with $|y-x|$ for each fixed $y\in\mathcal{X}$ $\forall i\in\mathcal{V}$ and because of \eqref{eq:f'xhf'xh=f'xhf'xh} and \eqref{eq:xh<=xh<=xh<=xh}, the second claim is true.
\end{proof}

Lemma~\ref{lem:PBVnonincr} says that the value of $V$ can never increase. In addition, the closer $\hat{x}_{u_1(k)}(k)$ and $\hat{x}_{u_2(k)}(k)$ get, the larger the value of $V$ drops, and the drop is maximized when $\hat{x}_{u_1(k)}(k)$ and $\hat{x}_{u_2(k)}(k)$ are equalized. These observations suggest that perhaps it is possible to design an algorithm that only forces $\hat{x}_{u_1(k)}(k)$ and $\hat{x}_{u_2(k)}(k)$ to approach each other (as opposed to becoming equal) to the detriment of a smaller drop in the value of $V$, but at the benefit of not having to share the $f_i$'s. The following algorithm, referred to as {\em Pairwise Bisectioning} (PB), shows that this is indeed the case and utilizes a bisection step that allows $\hat{x}_{u_1(k)}(k)$ and $\hat{x}_{u_2(k)}(k)$ to get arbitrarily close:

\begin{algorithm}[Pairwise Bisectioning]\label{alg:PB}
\begin{algorithminit}{}
\item Each node $i\in\mathcal{V}$ computes $x_i^*\in\mathcal{X}$, creates variables $\hat{x}_i,a_i,b_i\in\mathcal{X}$, and sets $\hat{x}_i\leftarrow x_i^*$.
\end{algorithminit}
\begin{algorithmoper}{At each iteration:}
\item A node with one or more one-hop neighbors, say, node $i$, initiates the iteration and selects a one-hop neighbor, say, node $j$, to gossip. Node $i$ transmits $\hat{x}_i$ to node $j$. Node $j$ sets $a_j\leftarrow\min\{\hat{x}_i,\hat{x}_j\}$ and $b_j\leftarrow\max\{\hat{x}_i,\hat{x}_j\}$ and transmits $\hat{x}_j$ to node $i$. Node $i$ sets $a_i\leftarrow\min\{\hat{x}_i,\hat{x}_j\}$ and $b_i\leftarrow\max\{\hat{x}_i,\hat{x}_j\}$. Nodes $i$ and $j$ select the number of bisection rounds $R\in\mathbb{P}$.
\item Repeat the following $R$ times: Node $j$ transmits $f_j'(\frac{a_j+b_j}{2})-f_j'(\hat{x}_j)$ to node $i$. Node $i$ tests if $f_j'(\frac{a_j+b_j}{2})-f_j'(\hat{x}_j)+f_i'(\frac{a_i+b_i}{2})-f_i'(\hat{x}_i)\ge0$. If so, node $i$ sets $b_i\leftarrow\frac{a_i+b_i}{2}$ and transmits LEFT to node $j$, and node $j$ sets $b_j\leftarrow\frac{a_j+b_j}{2}$. Otherwise, node $i$ sets $a_i\leftarrow\frac{a_i+b_i}{2}$ and transmits RIGHT to node $j$, and node $j$ sets $a_j\leftarrow\frac{a_j+b_j}{2}$. End repeat.
\item Node $j$ transmits $f_j'(c_j)-f_j'(\hat{x}_j)$ to node $i$, where $c_j=\bigl\{\begin{smallmatrix}a_j & \text{if $\hat{x}_j\le a_j$}\\ b_j & \text{if $\hat{x}_j\ge b_j$}\end{smallmatrix}$. Node $i$ tests if $\Bigl(f_j'(c_j)-f_j'(\hat{x}_j)+f_i'(c_i)-f_i'(\hat{x}_i)\Bigr)(\hat{x}_i-\frac{a_i+b_i}{2})\ge0$, where $c_i=\bigl\{\begin{smallmatrix}a_i & \text{if $\hat{x}_i\le a_i$}\\ b_i & \text{if $\hat{x}_i\ge b_i$}\end{smallmatrix}$. If so, node $i$ sets $\hat{x}_i\leftarrow(f_i')^{-1}(f_i'(\hat{x}_i)-f_j'(c_j)+f_j'(\hat{x}_j))$ and node $j$ sets $\hat{x}_j\leftarrow c_j$. Otherwise, node $i$ transmits $f_i'(c_i)-f_i'(\hat{x}_i)$ to node $j$ and sets $\hat{x}_i\leftarrow c_i$, and node $j$ sets $\hat{x}_j\leftarrow(f_j')^{-1}(f_j'(\hat{x}_j)-f_i'(c_i)+f_i'(\hat{x}_i))$.
\end{algorithmoper}
\end{algorithm}

Notice that Step~1 of PB is identical to that of PE except that each node $i\in\mathcal{V}$ creates two additional variables, $a_i$ and $b_i$, which are used in Step~2 to represent the initial bracket $[a_i,b_i]=[a_j,b_j]=[\min\{\hat{x}_i,\hat{x}_j\},\max\{\hat{x}_i,\hat{x}_j\}]$ for bisection purposes. Step~3 describes execution of the bisection method, where $R\in\mathbb{P}$ denotes the number of bisection rounds, which may be different for each iteration (e.g., a large $R$ may be advisable when $\hat{x}_i$ and $\hat{x}_j$ are very different). Observe that upon completing Step~3, $x_{\text{eq}}\in[a_i,b_i]=[a_j,b_j]\subset[\min\{\hat{x}_i,\hat{x}_j\},\max\{\hat{x}_i,\hat{x}_j\}]$ and $b_i-a_i=b_j-a_j=\frac{1}{2^R}|\hat{x}_j-\hat{x}_i|$, where $x_{\text{eq}}$ denotes the equalized value of $\hat{x}_i$ and $\hat{x}_j$ if PE were used. Moreover, upon completing Step~4, $x_{\text{eq}}\in[\min\{\hat{x}_i,\hat{x}_j\},\max\{\hat{x}_i,\hat{x}_j\}]\subset[a_i,b_i]=[a_j,b_j]$, where $\hat{x}_i$ and $\hat{x}_j$ here represent new values. Therefore, upon completing each iteration $k\in\mathbb{P}$,
\begin{align}
|\hat{x}_{u_1(k)}(k)-\hat{x}_{u_2(k)}(k)|\le\frac{1}{2^R}|\hat{x}_{u_1(k)}(k-1)-\hat{x}_{u_2(k)}(k-1)|,\quad\forall k\in\mathbb{P}.\label{eq:|xhxh|<=12R|xhxh|}
\end{align}
Finally, note that unlike PE which requires two real-number transmissions per iteration, PB requires as many as $3+R$ or $4+R$. However, it allows the nodes to never share their $f_i$'s.

The following theorem establishes the asymptotic convergence of PB under Assumption~\ref{asm:PEnetconn}:

\begin{theorem}\label{thm:PBasymconv}
Consider the use of PB described in Algorithm~\ref{alg:PB}. Suppose Assumptions~\ref{asm:fi} and~\ref{asm:PEnetconn} hold. Then, \eqref{eq:limV=0} and \eqref{eq:limxh=x*} hold.
\end{theorem}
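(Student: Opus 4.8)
The plan is to follow the proof of Theorem~\ref{thm:PEasymconv}, pinpointing the single place where PB departs from PE. Write $\mathbf{x}(k)=(\hat{x}_1(k),\ldots,\hat{x}_N(k))$, $\mathbf{x}^*=(x^*,\ldots,x^*)$, and $D_k:=V(\mathbf{x}(k-1))-V(\mathbf{x}(k))$. First I would reproduce the PE setup. Since PB meets the conservation relations, $\mathbf{x}(k)$ stays on the invariant manifold $\mathcal{M}$; and since PB meets the approaching condition \eqref{eq:xh<=xh<=xh<=xh}, the interval $[\min_{i\in\mathcal{V}}\hat{x}_i(k),\max_{i\in\mathcal{V}}\hat{x}_i(k)]$ is monotonically nested (the PB counterpart of Proposition~\ref{pro:xhwelldef}), so $\mathbf{x}(k)$ remains in the compact set $\mathcal{C}:=\mathcal{M}\cap[\min_{i\in\mathcal{V}}\hat{x}_i(0),\max_{i\in\mathcal{V}}\hat{x}_i(0)]^N$, which also contains $\mathbf{x}^*$. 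On $\mathcal{C}$, the function $V$ in \eqref{eq:V=sumfx*fxhf'xhx*xh} is continuous and positive definite with respect to $\mathbf{x}^*$, and by Lemma~\ref{lem:PBVnonincr} the sequence $(V(\mathbf{x}(k)))_{k=0}^\infty$ is non-increasing; hence $V(\mathbf{x}(k))\to V_\infty\ge0$, and by compactness, continuity, and positive definiteness, \eqref{eq:limV=0} (i.e.\ $V_\infty=0$) is equivalent to \eqref{eq:limxh=x*}. So it suffices to prove $V_\infty=0$, and since $\sum_{k=1}^\infty D_k=V(\mathbf{x}(0))-V_\infty<\infty$, we also have $D_k\to0$.

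The one step special to PB is showing that the pre-gossip gap $\delta_k:=|\hat{x}_{u_1(k)}(k-1)-\hat{x}_{u_2(k)}(k-1)|$ tends to $0$. For PE this is read off from the exact drop formula in Lemma~\ref{lem:PEVnonincr}, but PB does not equalize, so I would instead combine both claims of Lemma~\ref{lem:PBVnonincr} with the contraction bound \eqref{eq:|xhxh|<=12R|xhxh|}. Assume $\hat{x}_{u_1(k)}(k-1)\le\hat{x}_{u_2(k)}(k-1)$. By Lemma~\ref{lem:PBVnonincr}, $V(\mathbf{x}(k))$ is a strictly increasing function of the post-gossip gap $\hat{x}_{u_2(k)}(k)-\hat{x}_{u_1(k)}(k)$ over $[0,\delta_k]$ and equals $V(\mathbf{x}(k-1))$ when that gap is $\delta_k$; by \eqref{eq:|xhxh|<=12R|xhxh|} with $R\ge1$, the post-gossip gap is at most $\delta_k/2$. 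Hence $D_k\ge\Delta(\mathbf{x}(k-1),u(k))$, where $\Delta(\mathbf{x},e)$ is the drop in $V$ obtained from $\mathbf{x}$ by conservatively shrinking the gap across edge $e$ to half its current value; $\Delta(\cdot,e)$ is continuous on $\mathcal{C}$ and strictly positive wherever the gap across $e$ is positive. A compactness argument over $\mathcal{C}$ and the finitely many gossiping pairs then produces a nondecreasing $\psi$ with $\psi(t)>0$ for $t>0$ satisfying $D_k\ge\psi(\delta_k)$ for all $k$, whence $D_k\to0$ implies $\delta_k\to0$.

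With $\delta_k\to0$ established, I would invoke the rest of the proof of Theorem~\ref{thm:PEasymconv} unchanged. That argument deduces $V_\infty=0$ (equivalently \eqref{eq:limxh=x*}) using only the nonnegativity and monotonicity of $V$, the monotone nesting of the estimate intervals, the fact that $\hat{x}_{u_1(k)}(k-1)-\hat{x}_{u_2(k)}(k-1)\to0$, Assumption~\ref{asm:PEnetconn}, and Proposition~\ref{pro:exisuniqx*}---all of which PB supplies. Morally, vanishing gaps along edges that, by Assumption~\ref{asm:PEnetconn}, span $\mathcal{V}$ and recur infinitely often drive the $\hat{x}_i(k)$'s to a common limit, which by conservation and Proposition~\ref{pro:exisuniqx*} must be $x^*$; this then gives both \eqref{eq:limV=0} and \eqref{eq:limxh=x*}.

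I expect the main obstacle to be the PB-specific step, namely $\delta_k\to0$. Because PB deliberately stops short of equalizing the gossiping pair, Lemma~\ref{lem:PEVnonincr}'s closed-form drop is unavailable, and one must extract a uniform, gap-dependent lower bound on $D_k$ purely from the qualitative strict monotonicity of Lemma~\ref{lem:PBVnonincr} together with the geometric factor $2^{-R}$ in \eqref{eq:|xhxh|<=12R|xhxh|}. Making this quantitative---uniformly over $\mathcal{C}$ and over all gossiping pairs, with only the $C^1$ regularity granted by Assumption~\ref{asm:fi}, which rules out a clean quadratic-type estimate---is the delicate part; the ensuing connectivity-to-consensus argument, though itself nontrivial, is exactly as in the PE case and can be quoted.
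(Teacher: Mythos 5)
Your proposal is correct in outline, but the mechanism you use for the one PB-specific step is genuinely different from the paper's. The paper extracts from the proof of Lemma~\ref{lem:PBVnonincr} that the drop $V(\mathbf{x}(k-1))-V(\mathbf{x}(k))$ upper-bounds each gossiping node's own Bregman increment $f_i(\hat{x}_i(k))-f_i(\hat{x}_i(k-1))-f_i'(\hat{x}_i(k-1))(\hat{x}_i(k)-\hat{x}_i(k-1))$, so by the gauge $\gamma$ of Lemma~\ref{lem:exisgamm} a small drop forces each gossiping node to \emph{move} by at most $\gamma^{-1}(\epsilon)$; the triangle inequality combined with \eqref{eq:|xhxh|<=12R|xhxh|} then yields $(1-2^{-R})\,\delta_k\le2\gamma^{-1}(\epsilon)$, hence $\delta_k\le4\gamma^{-1}(\epsilon)$, after which the entire contradiction scheme of Theorem~\ref{thm:PEasymconv} (the choice $\epsilon=\gamma(\frac{c}{4\beta N^2})$, the cluster-separation induction, the bound $V(\mathbf{x}(k_1))<c$ via $\beta$) is reused with the constant $2$ replaced by $4$. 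You instead lower-bound the drop by the drop that shrinking the gap to $\delta_k/2$ would produce (via the strict-monotonicity claim of Lemma~\ref{lem:PBVnonincr} and \eqref{eq:|xhxh|<=12R|xhxh|}) and then run a fresh compactness argument to manufacture the gauge $\psi$. That route does work, but it obliges you to prove continuity on $\mathcal{C}$ of the implicitly defined half-gap update behind $\Delta(\cdot,e)$ --- exactly the ``delicate part'' you flag --- whereas the paper's per-node-movement bound needs only Lemma~\ref{lem:exisgamm}, which is already in place from the PE proof; both routes ultimately rest on a compactness-derived gauge, yours applied to the composite drop $\Delta$, the paper's to the simpler per-node Bregman divergence. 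One smaller caveat: the tail of the PE proof cannot literally be ``invoked unchanged,'' since it is a contradiction argument keyed to the specific $\epsilon(c)$ rather than a standalone ``vanishing gossiping gaps plus Assumption~\ref{asm:PEnetconn} imply consensus'' lemma; your qualitative restatement (nested intervals, cluster separation, conservation, Proposition~\ref{pro:exisuniqx*}) does go through, but it is a rewrite rather than a quotation.
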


\begin{proof}
See Appendix~\ref{ssec:proofthmPBasymconv}.
\end{proof}

As it follows from the above, PB represents an alternative to PE, which is useful when nodes are either unable, or unwilling, to share their $f_i$'s. Although not pursued here, it is straightforward to see that PE and PB may be combined, so that equalizing is used when one of the gossiping nodes can send the other its $f_i$, and approaching is used when none of them can.

\section{Conclusion}\label{sec:concl}

In this paper, based on the ideas of conservation and dissipation, we have developed PE and PB, two non-gradient-based gossip algorithms that enable nodes to cooperatively solve a class of convex optimization problems over networks. Using Lyapunov stability theory and the convexity structure, we have shown that PE and PB are asymptotically convergent, provided that the gossiping pattern is sufficiently rich. We have also discussed several salient features of PE and PB, including their comparison with the subgradient algorithms and their connection with distributed averaging.

\appendix
\section{Appendix}\label{sec:app}

\subsection{Proof of Theorem~\ref{thm:PEasymconv}}\label{ssec:proofthmPEasymconv}

Suppose Assumption~\ref{asm:fi} holds and let $(u(k))_{k=1}^\infty$ satisfying Assumption~\ref{asm:PEnetconn} be given. Consider the following lemmas:

\begin{lemma}\label{lem:exisgamm}
Suppose Assumption~\ref{asm:fi} holds. Then, $\forall[a,b]\subset\mathcal{X}$, there exists a continuous and strictly increasing function $\gamma:[0,\infty)\rightarrow[0,\infty)$ satisfying $\gamma(0)=0$ and $\lim_{d\rightarrow\infty}\gamma(d)=\infty$, such that $\forall\eta>0$, $\forall i\in\mathcal{V}$, $\forall(x,y)\in[a,b]^2$, $f_i(y)-f_i(x)-f_i'(x)(y-x)\le\eta$ implies $|y-x|\le\gamma^{-1}(\eta)$.
\end{lemma}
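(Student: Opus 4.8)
The plan is to read the statement as a uniform lower bound on a Bregman-type remainder. For each $i\in\mathcal{V}$ put $B_i(x,y)=f_i(y)-f_i(x)-f_i'(x)(y-x)$. By the strict first-order convexity condition \eqref{eq:fy>=fxf'xyx}, $B_i(x,y)\ge0$ with equality iff $x=y$; moreover $\frac{\partial}{\partial y}B_i(x,y)=f_i'(y)-f_i'(x)$ and $f_i'$ is strictly increasing, so $B_i(x,\cdot)$ is strictly increasing in $|y-x|$ for each fixed $x$. The inequality to be proved is exactly a lower bound $B_i(x,y)\ge\gamma(|y-x|)$ that is uniform in $i\in\mathcal{V}$ and in $(x,y)\in[a,b]^2$: once such a $\gamma$ is available, $B_i(x,y)\le\eta$ forces $\gamma(|y-x|)\le\eta$, hence $|y-x|\le\gamma^{-1}(\eta)$, using that a continuous strictly increasing $\gamma:[0,\infty)\to[0,\infty)$ with $\gamma(0)=0$ and $\gamma(d)\to\infty$ is a bijection of $[0,\infty)$ onto itself with a well-defined, strictly increasing inverse.

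To build $\gamma$, first I would define, for $d\in[0,b-a]$,
\[
\phi(d)=\min\{B_i(x,y):i\in\mathcal{V},\ x,y\in[a,b],\ |y-x|=d\}.
\]
The minimum is attained since $\mathcal{V}$ is finite and $\{(x,y)\in[a,b]^2:|y-x|=d\}$ is compact. The monotonicity of $B_i(x,\cdot)$ noted above shows $\phi(d)$ also equals the infimum over $|y-x|\ge d$, so $\phi$ is non-decreasing; $\phi(0)=0$; and for $d\in(0,b-a]$ any minimizer has $x\neq y$, so $\phi(d)>0$. Writing $B_i(x,x+d)=\int_0^d(f_i'(x+s)-f_i'(x))\,ds\le d\,\omega_i(d)$ with $\omega_i$ the modulus of continuity of $f_i'$ on $[a,b]$ gives $\phi(d)\le d\max_{i\in\mathcal{V}}\omega_i(d)\to0$, so $\phi$ is also right-continuous at $0$. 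Crucially, by construction $\phi(|y-x|)\le B_i(x,y)$ for all $i\in\mathcal{V}$ and all $x,y\in[a,b]$.

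It then remains to minorize the non-decreasing function $\phi$ on $[0,b-a]$ (with $\phi(0)=0$, $\phi>0$ on $(0,b-a]$) by a continuous strictly increasing $\gamma_0$ with $\gamma_0(0)=0$, and to extend it from $[0,b-a]$ to $[0,\infty)$ by an affine piece of positive slope so that $\gamma(d)\to\infty$. I would do this explicitly on the dyadic mesh $d_n=(b-a)/2^n$: set $q_n=\min\{\phi(d_1),\ldots,\phi(d_n)\}>0$ and let $\gamma_0$ be the piecewise-linear interpolant through $(0,0)$ and the points $(d_n,\,q_{n+1}/2^n)$. The node values $q_{n+1}/2^n$ are strictly decreasing in $n$ (since $q_{n+1}\le q_n<2q_n$) and tend to $0$, so $\gamma_0$ is continuous, strictly increasing, and vanishes at $0$; and on each shell $[d_{n+1},d_n]$ one has $\gamma_0(d)\le q_{n+1}\le\phi(d_{n+1})\le\phi(d)$ by monotonicity of $\phi$, so $\gamma_0\le\phi$ throughout $[0,b-a]$. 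Combining, $\gamma(|y-x|)\le\phi(|y-x|)\le B_i(x,y)\le\eta$ yields $|y-x|\le\gamma^{-1}(\eta)$.

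I expect the minorant construction to be the only real obstacle: $\phi$ is a priori merely non-decreasing and could be flat (or, without the right-continuity remark, discontinuous), so one must work a little to extract a genuinely continuous, strictly increasing lower envelope that still vanishes at $0$. The remaining ingredients — positivity of the Bregman remainder from strict convexity, attainment of the minimum defining $\phi$ via compactness together with finiteness of $\mathcal{V}$, and the closing chain of inequalities — are routine. One bookkeeping point to respect is that $\gamma$ must be defined and bijective on all of $[0,\infty)$, not just on $[0,b-a]$, so that $\gamma^{-1}(\eta)$ is meaningful for every $\eta>0$; the affine extension handles this.
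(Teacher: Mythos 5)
Your proof is correct, and its skeleton matches the paper's: both reduce the claim to a uniform lower bound $f_i(y)-f_i(x)-f_i'(x)(y-x)\ge\gamma(|y-x|)$ on $[a,b]^2$, both define the comparison function by minimizing the Bregman remainder over the shell $\{|y-x|=d\}$ and over the finitely many $i\in\mathcal{V}$, and both extend affinely beyond $b-a$ to make $\gamma$ a bijection of $[0,\infty)$ onto itself. Where you genuinely diverge is in how the value function is turned into an admissible $\gamma$. The paper asserts that each $\gamma_i(d)=\min_{|y-x|=d}g_i(x,y)$ is itself \emph{continuous} (``due to the compactness of $\mathcal{K}(d)$ and the continuity of $g_i$'') and then proves it is strictly increasing by a minimizer-comparison argument, so that $\gamma=\min_i\gamma_i$ can be used directly; the continuity claim is really an instance of parametric minimization over a continuously varying feasible set (a Berge-type fact), which the paper leaves implicit. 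You instead establish only the weaker properties that the value function $\phi$ is non-decreasing (via the observation that the minimum over $|y-x|=d$ equals the infimum over $|y-x|\ge d$) and strictly positive for $d>0$, and then manufacture an explicit continuous, strictly increasing, piecewise-linear minorant $\gamma_0\le\phi$ on a dyadic mesh. This costs you an extra construction but buys you a more self-contained argument: you never need continuity or strict monotonicity of the value function itself, which is the one point the paper's proof glosses over. Two cosmetic remarks: your right-continuity-at-$0$ observation for $\phi$ is not actually used (continuity of $\gamma_0$ at $0$ already follows from the node values $q_{n+1}/2^n\to0$), and you should let the mesh index start at $n=0$ (node $(b-a,\,q_1)$) so that the interpolant is defined on all of $[0,b-a]$; with that, the verification $\gamma_0(d)\le q_{n+1}\le\phi(d_{n+1})\le\phi(d)$ on each shell goes through exactly as you wrote it.
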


\begin{proof}
Let $[a,b]\subset\mathcal{X}$. For each $i\in\mathcal{V}$, define $g_i:[a,b]^2\rightarrow\mathbb{R}$ as $g_i(x,y)=f_i(y)-f_i(x)-f_i'(x)(y-x)$. Due to Assumption~\ref{asm:fi} and \eqref{eq:fy>=fxf'xyx}, $g_i(x,y)\ge0$ $\forall(x,y)\in[a,b]^2$, where the equality holds if and only if $x=y$. Moreover, since $f_i'$ is strictly increasing and $g_i(x,y)$ can be written as $g_i(x,y)=\int_x^y(f_i'(t)-f_i'(x))dt$, $g_i(x,y)$ is strictly increasing with $|y-x|$ for each fixed $x\in[a,b]$. Furthermore, because $f_i$ and $f_i'$ are continuous, $g_i$ is continuous. Next, for each $d\in[0,b-a]$, let $\mathcal{K}(d)=\{(x,y)\in[a,b]^2:|y-x|=d\}$. Also, for each $i\in\mathcal{V}$, define $\gamma_i:[0,b-a]\rightarrow\mathbb{R}$ as $\gamma_i(d)=\min_{(x,y)\in\mathcal{K}(d)}g_i(x,y)$. Due to the compactness of $\mathcal{K}(d)$ $\forall d\in[0,b-a]$ and the continuity of $g_i$, $\gamma_i$ is well-defined and continuous. In addition, since $g_i(x,y)=0$ $\forall(x,y)\in\mathcal{K}(0)$, $\gamma_i(0)=0$. Now pick any $d_1$ and $d_2$ such that $0\le d_1<d_2\le b-a$. Let $(x_2,y_2)\in\mathcal{K}(d_2)$ be such that $\gamma_i(d_2)=g_i(x_2,y_2)$. If $y_2>x_2$, then $y_2-x_2=d_2$. In this case, $\exists y_1\in[x_2,y_2)$ such that $y_1-x_2=d_1$. Since $g_i(x_2,y)$ is strictly increasing with $y$ for $y\ge x_2$, we have $\gamma_i(d_1)\le g_i(x_2,y_1)<g_i(x_2,y_2)=\gamma_i(d_2)$. Similarly, if $y_2<x_2$, we also have $\gamma_i(d_1)<\gamma_i(d_2)$. Hence, $\gamma_i$ is strictly increasing. Finally, define $\gamma:[0,\infty)\rightarrow[0,\infty)$ as $\gamma(d)=\bigl\{\begin{smallmatrix}\min_{i\in\mathcal{V}}\gamma_i(d) & \text{if $d\in[0,b-a]$}\\ \min_{i\in\mathcal{V}}\gamma_i(b-a)+d-(b-a) & \text{if $d\in(b-a,\infty)$}\end{smallmatrix}$. Note that $\gamma(0)=0$ since $\gamma_i(0)=0$ $\forall i\in\mathcal{V}$, and that $\lim_{d\rightarrow\infty}\gamma(d)=\infty$. Moreover, since $\gamma_i$ is continuous and strictly increasing $\forall i\in\mathcal{V}$, so is $\gamma$ on $[0,b-a]$. Also, observe that $\gamma$ is continuous and strictly increasing on $[b-a,\infty)$. Thus, $\gamma$ is continuous and strictly increasing. Now let $\eta>0$, $i\in\mathcal{V}$, and $(x,y)\in[a,b]^2$. Suppose $g_i(x,y)\le\eta$. If $\eta\le\gamma(b-a)$, then $|y-x|\le\gamma^{-1}(\eta)$ because $\gamma(|y-x|)\le\gamma_i(|y-x|)\le g_i(x,y)\le\eta$. If $\eta>\gamma(b-a)$, then $|y-x|\le b-a<\gamma^{-1}(\eta)$.
\end{proof}

\begin{lemma}\label{lem:exisbeta}
Suppose Assumption~\ref{asm:fi} holds. Then, $\forall[a,b]\subset\mathcal{X}$, $\exists\beta\in(0,\infty)$ such that $\forall i\in\mathcal{V}$, $\forall(x,y)\in[a,b]^2$, $f_i(y)-f_i(x)-f_i'(x)(y-x)\le\beta|y-x|$.
\end{lemma}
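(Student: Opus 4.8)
The plan is to bound the quantity $g_i(x,y) := f_i(y)-f_i(x)-f_i'(x)(y-x)$ on $[a,b]^2$ in terms of $|y-x|$, by exploiting the integral representation already used in the proof of Lemma~\ref{lem:exisgamm}, namely $g_i(x,y)=\int_x^y(f_i'(t)-f_i'(x))\,dt$. Since $f_i'$ is continuous on $\mathcal{X}$, it is continuous on the compact interval $[a,b]$, hence bounded there; let $M_i = \max_{t\in[a,b]}f_i'(t) - \min_{t\in[a,b]}f_i'(t)$ be its oscillation. For any $x,y\in[a,b]$ and any $t$ between $x$ and $y$, we have $|f_i'(t)-f_i'(x)|\le M_i$, so $|g_i(x,y)|\le M_i|y-x|$; combined with the nonnegativity of $g_i$ from \eqref{eq:fy>=fxf'xyx}, this gives $g_i(x,y)\le M_i|y-x|$.

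Having done this for each $i$, I would simply take $\beta = \max_{i\in\mathcal{V}}M_i$, which is finite since $\mathcal{V}$ is a finite set and each $M_i<\infty$. If one wants $\beta$ strictly positive even in degenerate situations, replace it by $\max\{1,\max_{i}M_i\}$; but since each $f_i$ is strictly convex, $f_i'$ is strictly increasing and hence $M_i>0$ already, so $\beta>0$ automatically. This yields the claim: $f_i(y)-f_i(x)-f_i'(x)(y-x)\le\beta|y-x|$ for all $i\in\mathcal{V}$ and all $(x,y)\in[a,b]^2$.

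There is no real obstacle here — the lemma is the easy ``upper companion'' to Lemma~\ref{lem:exisgamm} (which supplied the harder lower bound via a compactness/minimization argument), and the only ingredients are the integral formula for the Bregman-type remainder, continuity of $f_i'$ on a compact interval, and finiteness of $\mathcal{V}$. The one point worth stating carefully is the passage from the pointwise bound on the integrand to the bound on the integral, handling both orderings $x<y$ and $x>y$ uniformly; this is immediate from $\bigl|\int_x^y h(t)\,dt\bigr|\le |y-x|\sup_{t\in[a,b]}|h(t)|$ applied with $h(t)=f_i'(t)-f_i'(x)$.
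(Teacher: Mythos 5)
Your proof is correct and takes essentially the same route as the paper's: both bound the remainder $f_i(y)-f_i(x)-f_i'(x)(y-x)$ by a uniform bound on the variation of $f_i'$ over $[a,b]$ times $|y-x|$ and then maximize over the finitely many $i\in\mathcal{V}$, the paper via the Mean Value Theorem with $\beta=1+2\max_{j\in\mathcal{V}}|f_j'(b)|$ and you via the integral representation with the oscillation $M_i=f_i'(b)-f_i'(a)$. If anything your constant is the more careful one: the paper's step $|f_i'(c)|+|f_i'(x)|\le 2|f_i'(b)|$ tacitly assumes $|f_i'|$ attains its maximum on $[a,b]$ at $b$, which can fail when $f_i'$ is negative with large magnitude there (the safe constant is $2\max\{|f_i'(a)|,|f_i'(b)|\}$), whereas your oscillation bound and the $\max\{1,\cdot\}$ safeguard for the degenerate case $a=b$ avoid both issues.
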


\begin{proof}
Let $[a,b]\subset\mathcal{X}$ and $\beta=1+2\max_{j\in\mathcal{V}}|f_j'(b)|$. Obviously, $\beta>0$, and by Assumption~\ref{asm:fi}, $\beta<\infty$. Let $i\in\mathcal{V}$ and $(x,y)\in[a,b]^2$. Since $f_i$ is continuously differentiable, by the Mean Value Theorem, $\exists c$ between $x$ and $y$ such that $f_i(y)-f_i(x)=f_i'(c)(y-x)$. This, along with the triangle inequality and the fact that $f_i'$ is strictly increasing, implies that $f_i(y)-f_i(x)-f_i'(x)(y-x)=(f_i'(c)-f_i'(x))(y-x)\le|f_i'(c)-f_i'(x)|\cdot|y-x|\le(|f_i'(c)|+|f_i'(x)|)|y-x|\le2|f_i'(b)|\cdot|y-x|\le\beta|y-x|$.
\end{proof}

Let $a=\min_{i\in\mathcal{V}}\hat{x}_i(0)$ and $b=\max_{i\in\mathcal{V}}\hat{x}_i(0)$. Then, it follows from Proposition~\ref{pro:xhwelldef} that $\hat{x}_i(k)\in[a,b]\subset\mathcal{X}$ $\forall k\in\mathbb{N}$ $\forall i\in\mathcal{V}$ and from \eqref{eq:sumf'xh=0} and Lemma~\ref{lem:exisuniqz} that $x^*\in[a,b]$. By Lemma~\ref{lem:exisgamm}, there exists a continuous and strictly increasing function $\gamma:[0,\infty)\rightarrow[0,\infty)$ satisfying $\gamma(0)=0$ and $\lim_{d\rightarrow\infty}\gamma(d)=\infty$, such that $\forall\eta>0$, $\forall i\in\mathcal{V}$, $\forall(x,y)\in[a,b]^2$, $f_i(y)-f_i(x)-f_i'(x)(y-x)\le\eta$ implies $|y-x|\le\gamma^{-1}(\eta)$. Also, by Lemma~\ref{lem:exisbeta}, $\exists\beta\in(0,\infty)$ such that $\forall i\in\mathcal{V}$, $\forall(x,y)\in[a,b]^2$, $f_i(y)-f_i(x)-f_i'(x)(y-x)\le\beta|y-x|$. From Lemma~\ref{lem:PEVnonincr}, $(V(\mathbf{x}(k)))_{k=0}^\infty$ is nonnegative and non-increasing. Thus, $\exists c\ge0$ such that $\lim_{k\rightarrow\infty}V(\mathbf{x}(k))=c$. To show that $c$ must be zero, assume, to the contrary, that $c>0$. Let $\epsilon>0$ be given by $\epsilon=\gamma(\frac{c}{4\beta N^2})$. Then, $\exists k_1\in\mathbb{N}$ such that
\begin{align}
c\le V(\mathbf{x}(k))<c+\epsilon,\quad\forall k\ge k_1.\label{eq:c<=V<ce}
\end{align}
Due to \eqref{eq:c<=V<ce}, $V(\mathbf{x}(k-1))-V(\mathbf{x}(k))<\epsilon$ $\forall k\ge k_1+1$. Hence, from \eqref{eq:fy>=fxf'xyx} and \eqref{eq:VV=sumfxhfxhf'xhxhxh}, $f_i(\hat{x}_i(k))-f_i(\hat{x}_i(k-1))-f_i'(\hat{x}_i(k-1))(\hat{x}_i(k)-\hat{x}_i(k-1))<\epsilon$ $\forall k\ge k_1+1$ $\forall i\in u(k)$. As a result, $|\hat{x}_i(k)-\hat{x}_i(k-1)|\le\gamma^{-1}(\epsilon)$ $\forall k\ge k_1+1$ $\forall i\in u(k)$. Because of this and \eqref{eq:xh=xh},
\begin{align}
|\hat{x}_i(k)-\hat{x}_j(k)|\le2\gamma^{-1}(\epsilon),\quad\forall k\ge k_1,\;\forall i,j\in u(k+1).\label{eq:|xhxh|<=2ginve}
\end{align}
Now suppose $\max_{i\in\mathcal{V}}\hat{x}_i(k_1)-\min_{i\in\mathcal{V}}\hat{x}_i(k_1)>2(N-1)\gamma^{-1}(\epsilon)$. Then, $\exists p,q\in\mathcal{V}$ such that $\hat{x}_q(k_1)-\hat{x}_p(k_1)>2\gamma^{-1}(\epsilon)$ and $\mathcal{C}_1\cup\mathcal{C}_2=\mathcal{V}$, where $\mathcal{C}_1=\{i\in\mathcal{V}:\hat{x}_i(k_1)\le\hat{x}_p(k_1)\}$ and $\mathcal{C}_2=\{i\in\mathcal{V}:\hat{x}_i(k_1)\ge\hat{x}_q(k_1)\}$. Next, we show by induction that $\forall k\ge k_1$, $\hat{x}_i(k)\le\hat{x}_p(k_1)$ $\forall i\in\mathcal{C}_1$ and $\hat{x}_i(k)\ge\hat{x}_q(k_1)$ $\forall i\in\mathcal{C}_2$. Clearly, the statement is true for $k=k_1$. For $k\ge k_1+1$, suppose $\hat{x}_i(k-1)\le\hat{x}_p(k_1)$ $\forall i\in\mathcal{C}_1$ and $\hat{x}_i(k-1)\ge\hat{x}_q(k_1)$ $\forall i\in\mathcal{C}_2$. Then, due to \eqref{eq:|xhxh|<=2ginve}, $\forall i\in\mathcal{C}_1$, $\forall j\in\mathcal{C}_2$, $\{i,j\}\neq u(k)$, i.e., $u(k)\subset\mathcal{C}_1$ or $u(k)\subset\mathcal{C}_2$. It follows from \eqref{eq:xh=sumf'invsumf'xhifiinuxh} and Lemma~\ref{lem:exisuniqz} that $\hat{x}_i(k)\le\hat{x}_p(k_1)$ $\forall i\in\mathcal{C}_1$ and $\hat{x}_i(k)\ge\hat{x}_q(k_1)$ $\forall i\in\mathcal{C}_2$, completing the induction. Due again to \eqref{eq:|xhxh|<=2ginve}, we have $\forall i\in\mathcal{C}_1$, $\forall j\in\mathcal{C}_2$, $\{i,j\}\neq u(k)$ $\forall k\ge k_1+1$, which violates Assumption~\ref{asm:PEnetconn}. Consequently, $\max_{i\in\mathcal{V}}\hat{x}_i(k_1)-\min_{i\in\mathcal{V}}\hat{x}_i(k_1)\le2(N-1)\gamma^{-1}(\epsilon)$. It follows from \eqref{eq:sumf'xh=0} and Lemma~\ref{lem:exisuniqz} that $|x^*-\hat{x}_i(k_1)|\le\max_{j\in\mathcal{V}}\hat{x}_j(k_1)-\min_{j\in\mathcal{V}}\hat{x}_j(k_1)\le2(N-1)\gamma^{-1}(\epsilon)$ $\forall i\in\mathcal{V}$. Hence, $V(\mathbf{x}(k_1))\le\beta\sum_{i\in\mathcal{V}}|x^*-\hat{x}_i(k_1)|\le\beta\cdot N\cdot2(N-1)\gamma^{-1}(\epsilon)<c$, which contradicts \eqref{eq:c<=V<ce}. Therefore, $c=0$, i.e., \eqref{eq:limV=0} holds, implying that \eqref{eq:limxh=x*} is satisfied.

\subsection{Proof of Theorem~\ref{thm:PBasymconv}}\label{ssec:proofthmPBasymconv}

The proof is similar to that of Theorem~\ref{thm:PEasymconv}. Let $a$, $b$, $\gamma$, and $\beta$ be as defined in Appendix~\ref{ssec:proofthmPEasymconv}. Then, due to \eqref{eq:xh=xh/u}, \eqref{eq:xh<=xh<=xh<=xh}, \eqref{eq:sumf'xh=0}, and Lemma~\ref{lem:exisuniqz}, we have $\hat{x}_i(k)\in[a,b]$ $\forall k\in\mathbb{N}$ $\forall i\in\mathcal{V}$ and $x^*\in[a,b]$. From Lemma~\ref{lem:PBVnonincr}, $\lim_{k\rightarrow\infty}V(\mathbf{x}(k))=c$ for some $c\ge0$. To show that $c=0$, assume to the contrary that $c>0$ and let $\epsilon$ be as defined in~\ref{ssec:proofthmPEasymconv}. Then, \eqref{eq:c<=V<ce} holds for some $k_1\in\mathbb{N}$. It follows from the proof of Lemma~\ref{lem:PBVnonincr} that $f_i(\hat{x}_i(k))-f_i(\hat{x}_i(k-1))-f_i'(\hat{x}_i(k-1))(\hat{x}_i(k)-\hat{x}_i(k-1))\le V(\mathbf{x}(k-1))-V(\mathbf{x}(k))<\epsilon$ $\forall k\ge k_1+1$ $\forall i\in u(k)$. Thus, $|\hat{x}_i(k)-\hat{x}_i(k-1)|\le\gamma^{-1}(\epsilon)$ $\forall k\ge k_1+1$ $\forall i\in u(k)$. This, along with \eqref{eq:|xhxh|<=12R|xhxh|} and the fact that $R\in\mathbb{P}$, implies $|\hat{x}_i(k)-\hat{x}_j(k)|\le\frac{2\gamma^{-1}(\epsilon)}{1-\frac{1}{2^R}}\le4\gamma^{-1}(\epsilon)$ $\forall k\ge k_1$ $\forall i,j\in u(k+1)$. Then, using the same idea as in~\ref{ssec:proofthmPEasymconv}, it can be shown that $\max_{i\in\mathcal{V}}\hat{x}_i(k_1)-\min_{i\in\mathcal{V}}\hat{x}_i(k_1)\le4(N-1)\gamma^{-1}(\epsilon)$. This leads to $V(\mathbf{x}(k_1))<c$, which contradicts \eqref{eq:c<=V<ce}. Therefore, \eqref{eq:limV=0} and \eqref{eq:limxh=x*} hold.

\bibliographystyle{IEEEtran}
\bibliography{paper}

\end{document}